\newif\ifcomments\commentsfalse
\newtheorem{theorem} {Theorem}[section]
\newtheorem{proposition}[theorem]{Proposition}
\newtheorem{lemma}[theorem]{Lemma}
\newtheorem{corollary}[theorem]{Corollary}
\newtheorem{question}[theorem]{Question}
\newtheorem{conjecture}[theorem]{Conjecture}
\theoremstyle{definition}
\newtheorem{remark}[theorem]{Remark}
\newtheorem{definition}[theorem]{Definition}
\renewcommand{\AA}{\mathbb A}
\newcommand{\CC}{\mathbb C}
\newcommand{\GG}{\mathbb G}
\newcommand{\NN}{\mathbb N}
\newcommand{\PP}{\mathbb P}
\newcommand{\QQ}{\mathbb Q}
\newcommand{\RR}{\mathbb R}
\newcommand{\ZZ}{\mathbb Z}
\newcommand{\imic}{\cong}
\newcommand{\minus}{\smallsetminus}
\newcommand{\Sum}{\sum\limits}
\renewcommand{\Tilde}{\widetilde}
\newcommand{\Spec}{\mathop{\mathrm {Spec}}\nolimits}
\newcommand{\Ker}{\mathop{\mathrm {Ker}}\nolimits}
\newcommand{\codim}{\mathop{\mathrm {codim}}\nolimits}
\newcommand{\dist}{\mathop{\mathrm {dist}}\nolimits}
\newcommand{\vertices}{\mathop{\mathrm {Vx}}\nolimits}
\newcommand{\bda}{\mathbf a}
\newcommand{\bdb}{\mathbf b}
\newcommand{\bde}{\mathbf e}
\newcommand{\bdn}{\mathbf n}
\newcommand{\bdp}{\mathbf p}
\newcommand{\bdq}{\mathbf q}
\newcommand{\bdr}{\mathbf r}
\newcommand{\bds}{\mathbf s}
\newcommand{\bdv}{\mathbf v}
\newcommand{\bdw}{\mathbf w}
\newcommand{\bdx}{\mathbf x}
\newcommand{\bdy}{\mathbf y}
\newcommand{\bdz}{\mathbf z}
\newcommand\QQuint[5]{\ensuremath{\scriptstyle{#1,#2,#3,#4,#5}}}
\newcommand\Vn{\ensuremath{-1+\Ssum n_i}}
\newcommand\Deltae[1]{\ensuremath{\Delta_{#1,\eps}}}
\newcommand{\cGe}[1]{\ensuremath{\mathcal G_{#1,\eps}}}
\newcommand\Deltape{\Deltae{\bdp}}
\newcommand\Deltaoe[1]{\Delta^\circ_{#1,\eps}}
\newcommand\Ssum{{\Sigma}}
\newcommand{\Conv}{\mathop{\mathrm {Conv}}\nolimits}
\newcommand{\Vx}{\mathop{\mathrm {Vx}}\nolimits}
\newcommand{\RRp}{\RR_{\ge 0}}
\newcommand{\outside}{\Omega}
\newcommand{\Lambdap}{\Lambda_\bdp}
\newcommand{\eps}{\varepsilon}
\newcommand{\nullset}{\varnothing}
\begin{document}
\title{Blowups with log canonical singularities}
\author{G.K.~Sankaran\thanks{
Department of Mathematical Sciences, University of Bath, Bath
BA2~7AY, UK, {\tt G.K.Sankaran@bath.ac.uk}}
\and 
Francisco Santos
\thanks{
Departamento de Matem\'aticas, Estad\'istica y Computaci\'on,
Universidad de Can\-tabria,
39005 Santander, Spain,
{\tt francisco.santos@unican.es}.
Supported by grant MTM2017-83750-P of the Spanish Ministry of Economy
and Competitiveness and by  the Einstein
Foundation Berlin under grant EVF-2015-230 
}
}

\maketitle

\begin{abstract}
We show that the minimum weight of a weighted blow-up of $\AA^d$ with
$\eps$-log canonical singularities is bounded by a constant
depending only on $\eps$ and $d$. This was conjectured by Birkar. 

Using the recent classification of $4$-dimensional empty simplices 
by Iglesias-Vali\~no and Santos, we work out an explicit bound
for blowups of $\AA^4$ with terminal singularities: the smallest weight is 
always at most $32$, and
at most $6$ in all but finitely many cases.
\end{abstract}


\section{Introduction}\label{sect:introduction}

At a meeting of the COW seminar at City, University of
London on 7th February 2018, Caucher Birkar asked the following
question.

\begin{question}\label{qu:simple}
Denote by $\AA^4_\bdn$ the weighted blowup of $\AA^4$ at $0\in\AA^4$
with coprime weights $\bdn=(n_1,n_2,n_3,n_4)\in\NN^4$. If $\AA^4_\bdn$
has terminal singularities, is the smallest of the weights bounded?
\end{question}

By ``coprime'' we mean only that $\bdn$ is primitive: we do not require the
weights to be pairwise coprime.

This is a simplified version of a more ambitious conjecture.

\begin{conjecture}[Birkar]\label{qu:advanced}
Denote by $\AA^d_\bdn$ the weighted blowup of $\AA^d$ at $0\in\AA^d$
with coprime weights $\bdn=(n_1,\ldots,n_d)\in\NN^d$. If $\AA^d_\bdn$ has
$\eps$-log canonical singularities, then the smallest of the
weights is bounded by a constant depending only on $d$ and $\eps$.
\end{conjecture}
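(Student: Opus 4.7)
The plan is to translate Birkar's condition into a problem about lattice points in rational simplices, and then use a geometry-of-numbers argument to bound $\min_k n_k$.

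Via the standard toric dictionary, the weighted blow-up $\AA^d_\bdn$ corresponds to the fan obtained by star-subdividing the positive orthant $\RR_{\ge 0}^d$ at the primitive ray generated by $\bdn$. Its top-dimensional cones $\sigma_i$, $i=1,\dots,d$, are generated by $\bdn$ together with $\{e_j:j\neq i\}$, and a lattice vector in $\sigma_i$ with barycentric coordinates $(a_0,a_1,\dots,\widehat{a_i},\dots,a_d)$ defines an exceptional divisor with log discrepancy $\sum_k a_k$. Parameterising the $n_i$ coset representatives of $\ZZ^d/\latt{\bdn,e_j:j\neq i}$ in the fundamental parallelepiped by $k\in\{0,1,\dots,n_i-1\}$, a direct computation shows that the log discrepancies are
\[
f_i(k)\;:=\;\frac{k}{n_i}\;+\;\sum_{j\neq i}\Bigl\{-\frac{kn_j}{n_i}\Bigr\},
\]
so the $\eps$-lc hypothesis reads $f_i(k)\ge\eps$ for every $i$ and every $k\in\{1,\dots,n_i-1\}$. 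Geometrically, this says that the scaled rational simplex $\eps\cdot\Conv(0,\bdn,\{e_j:j\neq i\})$, of volume $\eps^d n_i/d!$, contains no nonzero lattice point strictly below its top face.

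The heart of the argument is to show that if $n:=\min_k n_k$ is sufficiently large then some $f_i(k)$ must drop below $\eps$. Take $i$ with $n_i=n$; the coprimality of $\bdn$ forces the cyclic subgroup of $(\RR/\ZZ)^{d-1}$ generated by $(n_j/n)_{j\neq i}$ to have full order $n$, so that the lattice $\hat L:=\ZZ^{d-1}+\tfrac1n(n_j)_{j\neq i}\ZZ\subset\RR^{d-1}$ has covolume $1/n$. Minkowski's second theorem supplies $d-1$ linearly independent vectors of $\hat L$ with $\ell^\infty$-norm at most $C_d\,n^{-1/(d-1)}$, which correspond to some $k\in\{1,\dots,n-1\}$ realising small $\|kn_j/n\|$ for all $j\neq i$. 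To control $k/n$ and $\sum_{j\neq i}\{-kn_j/n\}$ simultaneously, one augments $\hat L$ to a $d$-dimensional lattice $\tilde L\subset\RR^d$ in which the first coordinate records $k$, and applies Minkowski to a balanced box $[-N,N]\times[-\delta,\delta]^{d-1}$ with $N\delta^{d-1}\asymp 1$; choosing $N\asymp n^{1-1/d}$ and $\delta\asymp n^{-1/d}$ balances $k/n$ against each $\|kn_j/n\|$ at the common rate $n^{-1/d}$.

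The chief technical obstacle is the passage from a two-sided short vector with $\|kn_j/n\|<\delta$ to a one-sided one with $\{-kn_j/n\}<\delta$ for every $j$. The naive signed-combination trick --- for an invertible matrix $A$, find $s\in\{\pm1\}^{d-1}$ with $As$ coordinatewise non-negative --- fails in general, so a more refined extraction is required, e.g.\ via Van der Corput's sharpening of Minkowski (yielding many short lattice vectors) together with a pigeonhole on the $2^{d-1}$ possible sign patterns of their coordinates. Once a suitable one-sided short vector is produced, the corresponding $k\in\{1,\dots,n-1\}$ satisfies $f_i(k)=O(n^{-1/d})$; for $n$ larger than an explicit bound $M(d,\eps)=O((1/\eps)^d)$ this contradicts the $\eps$-lc hypothesis, proving $\min_k n_k\le M(d,\eps)$. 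In dimension~$4$ the terminal case ($\eps=1$) is handled along an independent route through the Iglesias-Vali\~no--Santos classification of empty $4$-dimensional simplices, yielding the explicit constants stated in the abstract.
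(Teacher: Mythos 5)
Your toric translation is set up correctly: the formula $f_i(k)=\frac{k}{n_i}+\sum_{j\neq i}\{-kn_j/n_i\}$ for the log discrepancies coming from the cone $\sigma_i$ is the right Reid--Tai-type criterion, and the covolume computation for $\hat L$ is fine. But the heart of your argument rests on a false statement. You fix the single index $i$ with $n_i=n_{\min}$ and aim to show that once $n_i$ is large some $f_i(k)$ drops below $\eps$; this is not true. Take $d=4$ and $\bdn=(r,\,r+1,\,2r-a,\,r+a)$ with $1\le a\le r-1$ and $\gcd(a,r)=1$, so that $\bdn$ is primitive and $n_{\min}=n_1=r$. For every $k$ with $1\le k\le r-1$ one computes
\[
f_1(k)\;=\;\frac{k}{r}+\Bigl\{-\frac{k}{r}\Bigr\}+\Bigl\{\frac{ka}{r}\Bigr\}+\Bigl\{-\frac{ka}{r}\Bigr\}\;=\;1+1\;=\;2,
\]
so the cone opposite the minimal weight is terminal, hence $\eps$-lc for every $\eps\le1$, no matter how large $r$ is. What fails for large $r$ is the condition on the \emph{other} cones, equivalently the hollowness of the whole simplex $\Conv(\bde_1,\dots,\bde_4,\bdn)$ with respect to $\ZZ^4+\ZZ\bdp$; no argument confined to the cyclic quotient of order $n_{\min}$ can detect this.

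The same example shows that what you call the ``chief technical obstacle'' --- upgrading a two-sided short vector ($\|kn_j/n\|<\delta$ for all $j$) to a one-sided one ($\{-kn_j/n\}<\delta$ for all $j$) --- is not a technical point but the entire content of the problem, and it cannot be repaired by Van der Corput plus a pigeonhole on sign patterns. A nonzero lattice point in the one-sided box is precisely a non-vertex lattice point of a simplex of normalised volume $n$, and hollow (indeed empty) lattice simplices of arbitrarily large volume exist in every dimension $\ge3$; so no bound on the covolume of $\hat L$ or $\tilde L$ can force such a point to exist, and for the lattice above there simply is none. This is why the paper does not argue by volume at all: it applies Lawrence's theorem to show that the group $\ZZ^d+\ZZ\bdp$, which must avoid the open shrunken simplex, is contained in one of \emph{finitely many} maximal closed subgroups $G$ with identity component $L$, and then bounds $n_{\min}$ on each such $G$ by analysing where the origin sits relative to the projected polytope $\pi_L(\Delta)$ (Proposition~\ref{prop:facetbound}), together with a compactness argument over the facet $\Delta_1$ to handle $\eps<1$. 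Any quantitative replacement would have to exploit the structure of all $d$ cones simultaneously (i.e.\ the full lattice $\Lambda_\bdp$) and the finiteness of families of hollow simplices, not the covolume of a single cyclic quotient.
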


Our main result, Theorem~\ref{thm:Birkar}, is a proof of
Conjecture~\ref{qu:advanced}.

\begin{theorem}\label{thm:Birkar}
In each fixed dimension $d$ and for each $\eps\in (0,1]$ there is
  an integer $\ell_{\eps,d}\in \NN$ such that if
  $\bdn=(n_1,\dots,n_d)\in\NN^d$ is primitive and the weighted blowup
  $\AA^d_\bdn$ has only $\eps$-log canonical singularities then
  $n_{\min}:= \min\{n_1,\dots,n_d\} \le \ell_{\eps,d}$. 
\end{theorem}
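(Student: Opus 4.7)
The plan is to translate $\eps$-log canonicity of $\AA^d_\bdn$ into a lattice-point condition on a collection of $d$ simplices, one per chart of the toric fan, and then to argue by contradiction using geometry of numbers.

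Standard toric criteria identify $\eps$-log canonicity of $\AA^d_\bdn$ as follows. The fan has maximal cones $\sigma_i=\langle\bdn,\bde_j:j\ne i\rangle$, and $\AA^d_\bdn$ is $\eps$-log canonical if and only if, for every $i$ and every nonzero lattice point $\bdw=\mu_0\bdn+\sum_{j\ne i}\mu_j\bde_j\in\sigma_i$ with $\mu_0,\mu_j\ge 0$, the barycentric-coordinate sum satisfies $\mu_0+\sum_{j\ne i}\mu_j\ge\eps$. Writing $N=\sum_j n_j$, an elementary computation shows this is equivalent to the single inequality $L(\bdw):=\sum_k w_k-(N-1)\min_i(w_i/n_i)\ge\eps$ for every $\bdw\in\ZZ^d\cap\RRp^d\setminus\{0\}$; equivalently, each $\eps$-dilated simplex $\eps\Delta_i:=\eps\cdot\Conv(0,\bdn,\{\bde_j\}_{j\ne i})$ contains only the origin as a lattice point.

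I would argue by contradiction. Suppose there is a sequence of primitive $\bdn^{(k)}\in\NN^d$ with $\AA^d_{\bdn^{(k)}}$ all $\eps$-log canonical but $n_{\min}^{(k)}\to\infty$; reorder so that $n_1^{(k)}=n_{\min}^{(k)}$. The goal is to produce, for $k$ sufficiently large, a nonzero lattice vector in some $\eps\Delta_i^{(k)}$. Focusing first on $\sigma_1$, the simplex $\eps\Delta_1^{(k)}$ has Euclidean volume $\eps^d n_1^{(k)}/d!\to\infty$, so heuristically lattice points ought to be plentiful. Slicing by the integer hyperplanes $\{w_1=t\}$ with $t\in\{1,\ldots,\lfloor\eps n_1\rfloor\}$, each slice is a translated $(d-1)$-simplex; a lattice point in it exists precisely when some such $t$ satisfies the one-sided Diophantine inequality $\sum_{j\ge 2}(\lceil tn_j/n_1\rceil-tn_j/n_1)<\eps-t/n_1$, i.e., one can simultaneously round $tn_j/n_1$ up by very small amounts for every $j\ge 2$.

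The main obstacle is precisely the one-sided nature of this Diophantine problem: Dirichlet's theorem yields $t$ for which $\{tn_j/n_1\}$ is close to $0$ or close to $1$ for each $j\ge 2$, with the side depending on $j$, whereas here all approximations must lie on the same side. I would resolve this by combining the slicing argument across all $d$ charts. A violating lattice vector $\bdw$ always lies in the chart $\sigma_i$ for which $w_i/n_i$ is minimal, so while $\sigma_1$ only detects violators with $w_1/n_1$ minimal, the remaining charts $\sigma_2,\ldots,\sigma_d$ cover the other cases; collectively the $d$ conditions treat every possible sign pattern in the Diophantine problem. Packaging them as a single statement about the centrally symmetric convex body $\Conv(\bigcup_i\eps\Delta_i\cup\bigcup_i(-\eps\Delta_i))$ and invoking Minkowski's first theorem should yield, once $n_{\min}$ exceeds a bound $\ell_{\eps,d}$ depending only on $\eps$ and $d$, a nonzero lattice point in some $\eps\Delta_i$, contradicting $\eps$-log canonicity. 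Such a soft compactness-type route is to be expected in general dimension, since the 4-dimensional explicit bound relies on the Iglesias-Vali\~no--Santos classification of empty $4$-simplices, which is unavailable beyond low dimensions.
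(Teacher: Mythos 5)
Your first paragraph is essentially right and matches the paper's Section~2: $\eps$-log canonicity is equivalent to the dilated simplices $\eps\Delta(\sigma_\bdn^j)$ having no nonzero lattice points off the facets away from the origin, i.e.\ to $\eps\Delta_\bdn$ being hollow (Lemma~\ref{lem:terminal}, Proposition~\ref{prop:onecone}). The gap is in the final step. Minkowski's first theorem applied to $K=\Conv\bigl(\bigcup_i\eps\Delta_i\cup\bigcup_i(-\eps\Delta_i)\bigr)$ produces a nonzero lattice point in $K$, not in the union $\bigcup_i\pm\eps\Delta_i$: the convex hull fills in the large gaps between the charts, and the lattice point Minkowski gives you may well sit in one of those gaps, yielding no contradiction. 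There is no volume bookkeeping that repairs this, because the underlying principle you are appealing to --- ``large volume forces a non-vertex lattice point in the union of the chart simplices'' --- is simply false: hollow simplices of arbitrarily large volume exist in every dimension $\ge 3$ (White's $(1,a,b)$ simplices, the $(6,10,15,n)$ family in dimension $4$). The whole content of the theorem is that such examples must have \emph{some} small weight, and that cannot be detected by volume, which only sees the product/sum of the weights. Your correct observation that the problem is a \emph{one-sided} simultaneous approximation problem is precisely the reason a soft pigeonhole or Minkowski argument does not close it.

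What is actually needed, and what the paper supplies, is a structural input: Lawrence's theorem (Theorem~\ref{thm:families}) that there are only finitely many maximal closed subgroups of $\RR^d$ containing $\ZZ^d$ and missing a given open set. One rewrites the hollowness condition as saying that the group $\Lambda_\bdp=\ZZ^d+\ZZ\bdp$, $\bdp=\bdn/V$, misses the open simplex $\Deltaoe{\bdp}$ (Corollary~\ref{coro:onecone}); Lawrence then traps $\Lambda_\bdp$ inside one of finitely many closed groups $G$ with identity component $L$, and the bound on $n_{\min}$ comes from a facet-of-the-projection argument for $\pi_L(\Delta)$ (Proposition~\ref{prop:facetbound}), not from volume. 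For $\eps<1$ an additional compactness argument over the points $\bdx\in\Delta_1$ is required (Proposition~\ref{prop:local_boundary}), since the relevant open set $\Deltaoe{\bdp}$ moves with $\bdp$. Without some substitute for Lawrence's finiteness theorem --- equivalently, for the fact that hollow polytopes of large volume admit lattice projections to lower-dimensional hollow polytopes --- your argument cannot be completed.
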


Our proof relies on a general result about subgroups of
$\RR^n$ that miss a given open set, due to Lawrence~\cite{La}, which
we state here as Theorem~\ref{thm:families}. The connection of that
result to terminal and canonical singularities, and to hollow and
empty simplices, was first noticed by A.~Borisov~\cite{Bor}.
Independently of us, and by somewhat different methods,
Y.~Chen~\cite{Chen} has proved Conjecture~\ref{qu:advanced} for the
case $d=3$.

We also give a precise answer to
Question~\ref{qu:simple}.

\begin{theorem}\label{thm:terminal4}
If the weighted blowup $\AA^4_\bdn$ has terminal singularities then
$n_{\min} \le 32$. Moreover, with finitely many exceptions
$n_{\min} \le 6$.
\end{theorem}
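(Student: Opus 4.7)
The plan is to translate terminality into a statement about empty $4$-simplices and then invoke the classification of Iglesias-Vali\~no and Santos.

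\textbf{Step 1 (toric translation).} The weighted blowup $\AA^4_\bdn$ is a simplicial toric variety whose fan is obtained from the positive orthant by starring at $\bdn$. Its four maximal cones are $\sigma_i=\langle\bdn, e_j:j\ne i\rangle$, and the toric singularity at (the fixed point of) $\sigma_i$ is terminal precisely when the closed simplex
\[
T_i=\Conv(0,e_1,\ldots,\widehat{e_i},\ldots,e_4,\bdn)\subset\RR^4
\]
is an \emph{empty} lattice $4$-simplex, i.e.\ contains no lattice points other than its five vertices. A direct determinant computation shows that the normalised lattice volume of $T_i$ equals $n_i$. Hence $\AA^4_\bdn$ has terminal singularities if and only if all four $T_i$ are empty; the quantity $n_{\min}$ is the smallest of the four volumes of these pairwise-compatible empty $4$-simplices, which share the vertices $0$ and $\bdn$.

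\textbf{Step 2 (classification and split).} Iglesias-Vali\~no and Santos classify empty $4$-simplices up to $\GL_4(\ZZ)$-equivalence: apart from a (long but) finite list of sporadic examples, every empty $4$-simplex belongs to one of a few explicit infinite families, each parameterised essentially by a single integer that governs the volume. I would apply this to the $T_i$ realising $n_{\min}$. If that simplex is sporadic then $n_{\min}$ is bounded by the largest volume appearing in the sporadic list; enumerating these, and checking for each candidate $\bdn$ that the other three $T_j$ are also empty, is expected to produce a finite exceptional set and the sharp bound $n_{\min}\le 32$. If instead the smallest $T_i$ lies in an infinite family, then the rigid structure of that family restricts $\bdn$ substantially, and one must verify that requiring the remaining three $T_j$ to be empty as well forces $n_{\min}\le 6$.

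\textbf{Step 3 (main difficulty).} I expect the main obstacle to be the infinite-family portion of Step~2: for each family in the Iglesias-Vali\~no–Santos list one has to write down a normal form of its generic member, parameterise the $\bdn\in\NN^4$ that can place it among the $T_i$, and show that the resulting tuple $(n_1,n_2,n_3,n_4)$ always has a coordinate at most $6$. Once that is done, the $n_{\min}\le 6$ bound holds outside a finite set, and the remaining sporadic part of Step~2 is a finite, essentially mechanical (computer-assisted) enumeration that pins down the maximum $n_{\min}=32$.
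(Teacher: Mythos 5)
Your Step~1 is correct and is essentially Proposition~\ref{prop:onecone}: terminality of $\AA^4_\bdn$ is equivalent to emptiness of the four simplices $\Delta(\sigma^i_\bdn)$, which (since $\Delta$ is always empty) the paper packages as emptiness of the \emph{single} simplex $\Delta_\bdn=\Conv(\bde_1,\dots,\bde_4,\bdn)$, of normalised volume $V=-1+\Ssum n_i$. The gap is in Steps~2--3: you propose to apply the classification of \cite{IS2} to the four simplices $T_i$, whose volumes are the individual weights $n_i$. This is the wrong object. If the $T_i$ realising $n_{\min}$ is non-sporadic, its volume is unbounded within its family (e.g.\ the width-one families contain empty simplices of every volume), so the classification gives no bound at all, and everything is deferred to the ``compatibility of the four $T_j$'' argument of your Step~3 --- which is the entire content of the theorem and for which you give no method. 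Even your sporadic case is not the finite enumeration you claim: a fixed $\GL_4(\ZZ)$-class of $T_{i_0}$ is realised by infinitely many $\bdn$ (e.g.\ the shear fixing $0,\bde_2,\bde_3,\bde_4$ and sending $\bdn$ to $(n_1,n_2+n_1,n_3,n_4)$ preserves the class of $T_1$ but not the emptiness of the other $T_j$), so ``checking the other three $T_j$ for each candidate $\bdn$'' ranges over an infinite set.

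The paper's route avoids both problems by applying the classification to $\Delta_\bdn$ itself. After the change of coordinates of Lemma~\ref{lem:change}, $\Delta_\bdn$ becomes the standard simplex with respect to the lattice $\Lambdap=\ZZ^4+\ZZ\bdp$ with $\bdp=\bdn/V$, so the data by which \cite{IS2} organises empty $4$-simplices --- the volume $V$ and the quintuple of barycentric coordinates of the generating point --- \emph{is} (up to normalisation) the weight vector $\bdn$. The theorem then splits by the invariant $k=\codim L$: for $k=1,2$ facet-width bounds give $n_{\min}\le 2$ (Corollary~\ref{coro:cases}); for $k=3$ the $29+17$ explicit quintuples are run through Lemmas~\ref{lem:dim1general} and~\ref{lem:dim1special} to get $n_{\min}\le 6$ (Proposition~\ref{prop:k3}); and the sporadic case $k=4$ is a genuinely finite computation over $2641\times 5$ candidate singularities, yielding the bound $32$ and the finite exceptional set with $6<n_{\min}\le 32$ (Proposition~\ref{prop:k4}). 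To repair your argument you would need to replace the $T_i$ by $\Delta_\bdn$ and then supply the quintuple-by-quintuple analysis; as written, the proposal does not establish either bound.
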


The proof of this statement relies on the complete classification of
empty simplices in dimension four due to Iglesias-Vali\~no and
Santos~\cite{IS2}.  The bound of $6$ is attained by the infinite
family of blowups with $\bdn=(6,10,15,n)$, which have terminal
singularities whenever $n$ is coprime with $30$ (see
Remark~\ref{rem:nmin6}).  The bound of $32$ is attained only by the
blowup with $\bdn=(32, 41,71, 102)$.  There are a total of 1784
blowups of $\AA^4$ with $n_{\min}>6$; the number of them for each
value of $n_{\min}$ is listed in Proposition~\ref{prop:k4}.

\medskip 

These results extend a theorem of
Kawakita~\cite[Theorem~3.5]{Kawakita}, which says that a weighted
blowup $\AA^3_\bdn$ is terminal if and only if the weights are
$(1,a,b)$ with $a$ and $b$ coprime. Kawakita's result also follows
from our methods: see Corollary~\ref{coro:kawakita} below.

The context of~\cite{Kawakita} is the Sarkisov program, in particular
birational rigidity. To investigate Sarkisov links involving a Fano
3-fold $F$ of Picard rank~$1$ requires in principle an understanding
of all possible divisorial contractions in the Mori program with
target~$F$. The main outcome of~\cite{Kawakita} is that any divisorial
contraction in the Mori program with centre a smooth point is a
weighted blowup, and~\cite[Theorem~3.5]{Kawakita} says that the
weights must then be $(1,a,b)$.

This is important because, at least in dimension~3, we understand
divisorial contractions well if we know their sources, but not so well
if we know their targets. So~\cite{Kawakita} provides a description
of all possible baskets of singularities in a terminal 3-fold with a
divisorial contraction whose centre is a smooth point. This may be
thought of as a relative boundedness result, showing that exceptional
divisors are weighted projective planes of the form $\PP(1,a,b)$.

Birkar's Conjecture~\ref{qu:advanced} arises analogously in his
work~\cite{Birkar} on boundedness of log Calabi-Yau fibrations. One
way to view it is as a local version of the BAB conjecture, in a quite
special case.

\medskip
\noindent
\emph{Acknowledgements:} Some background on birational geometry was
supplied to GKS by Anne-Sophie Kaloghiros.  The explanations here
relating these results to their wider context are largely hers, but
errors and omissions in such explanations are definitely ours.  Parts
of this work were carried out while GKS was visiting Fukuoka
University and KIAS, Seoul: he thanks both for hospitality and a
helpful environment.  We also thank the organisers of MEGA 2019
(Madrid), where the two authors first met and discussed
these questions.

\section{Singularities and simplices}\label{sect:singlecone}

Geometrically, our approach is to use toric geometry to rephrase the
problem in terms of polytopes. We shall be working in $\RR^d$ with its
standard basis $\bde_1=(1,0,\ldots,0),\ldots,\bde_d$. We shall
frequently need to add up the coordinates of a vector, so we write
$\Ssum x_i$ to abbreviate $\Sum\nolimits_{i=1}^d x_i$.

\begin{definition}\label{def:polytopes}
Let $\Lambda\subseteq \RR^d$ be a lattice: that is, a finitely
generated free abelian subgroup of rank~$d$ such that
$\RR^d=\Lambda\otimes\RR$. A \emph{polytope} $\Pi$ in $\RR^d$ is a
bounded intersection of finitely many closed half-spaces. A point
$\bdv\in \Pi$ is a \emph{vertex} if $\Pi\cap H=\{\bdv\}$ for some
affine hyperplane $H\subset \RR^d$: we denote the set of vertices of
$\Pi$ by $\vertices(\Pi)$. The convex hull of a set $X\subset \RR^d$
is denoted $\Conv(X)$: a polytope $\Pi$ is always equal to the convex
hull $\Conv(\Vx(\Pi))$ of its vertices. $\Pi$ is a \emph{lattice
  polytope} if $\vertices(\Pi)\subset \Lambda$.
\end{definition}

The next definition is usually made only for the case where
$\Gamma$ is a lattice and $\Pi$ is a lattice polytope,
but we need it in a more general setting.
\begin{definition}\label{def:empty}
Fix a subgroup $\Gamma$ of $\RR^d$. We say that a polytope $\Pi$ is
\emph{hollow with respect to $\Gamma$} if $\Pi\cap \Gamma\subseteq
\partial\Pi$ and \emph{empty with respect to $\Gamma$} if $\Pi\cap
\Gamma\subseteq \vertices(\Pi)$.  We omit ``with respect to $\Gamma$''
when $\Gamma$ is understood.
\end{definition} 

Let $\sigma=\sum \RRp\bdw_r$ be a nondegenerate closed rational
polyhedral cone in $\RR^d$, where $\bdw_r\in \Lambda$ are primitive
generators of the rays of $\sigma$. We denote by $\Delta(\sigma)$ the
lattice polytope $\Conv(\{0\}\cup\{\bdw_i\})$, and let $X_\sigma$ be
the affine variety $\Spec\CC[\sigma^\vee\cap \Lambda]$, as usual in
toric geometry. With this notation, $X_\sigma$ is $\QQ$-Gorenstein if
and only if all the $\bdw_i$ lie in an affine hyperplane, and is
$\QQ$-factorial if and only if $\sigma$ is simplicial; that is, if
$\Delta(\sigma)$ is a simplex.

The following fundamental fact is well known.
\begin{lemma}\label{lem:terminal}
Let $\eps\in(0,1]$. Then:
  \begin{enumerate}
  \item[(a)] $X_\sigma$ is $\eps$-log terminal if and only if
    $\eps \Delta(\sigma)$ is an empty polytope.
  \item[(b)] $X_\sigma$ is $\eps$-log canonical if and only if
    $\eps \Delta(\sigma)$ is hollow and all nonzero lattice points in it lie
    in facets not containing the origin.   
   \end{enumerate}
\end{lemma}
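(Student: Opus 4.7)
The plan is to unwind both statements via the standard toric discrepancy formula. Because $X_\sigma$ is $\QQ$-Gorenstein (implicit in any discussion of $\eps$-log terminal or $\eps$-log canonical singularities), the primitive ray generators $\bdw_i$ of $\sigma$ lie on a common affine hyperplane, so there is a linear form $\psi\colon\RR^d\to\RR$ with $\psi(\bdw_i)=1$ for every $i$. With this notation $\Delta(\sigma)=\{\bdx\in\sigma:\psi(\bdx)\le 1\}$, and scaling gives
\[
\eps\Delta(\sigma)=\{\bdx\in\sigma:\psi(\bdx)\le\eps\},
\]
whose unique facet not containing the origin is the slice $\{\psi=\eps\}\cap\sigma$. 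The bridge between the two sides of each equivalence is the standard toric discrepancy computation: every divisorial valuation of $\CC(X_\sigma)$ with center in the singular locus is a toric valuation $v_\bdv$ attached to some primitive $\bdv\in\sigma\cap\Lambda$, and its discrepancy is $a(v_\bdv,X_\sigma)=\psi(\bdv)-1$.

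For (a), I would argue that $X_\sigma$ is $\eps$-log terminal exactly when $\psi(\bdv)>\eps$ for every primitive $\bdv\in(\sigma\cap\Lambda)\setminus\{0,\bdw_1,\dots,\bdw_d\}$. Equivalently, no such $\bdv$ lies in $\eps\Delta(\sigma)$; a non-primitive lattice point $\bdv=k\bdv_0$ with $k\ge 2$ is automatically excluded, since $\psi(\bdv)=k\psi(\bdv_0)>\eps$ as soon as $\psi(\bdv_0)>\eps$. Hence the only lattice points that can lie in $\eps\Delta(\sigma)$ are the origin and, when $\eps=1$, the vertices $\bdw_i$ themselves; in that case they are also vertices of $\eps\Delta(\sigma)$. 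This is precisely emptiness with respect to $\Lambda$ in the sense of Definition~\ref{def:empty}. For (b), $\eps$-log canonicity relaxes the strict inequality to $\psi(\bdv)\ge\eps$. The same reduction to primitive vectors shows that every nonzero lattice point of $\eps\Delta(\sigma)$ must be primitive and must satisfy $\psi(\bdv)=\eps$, i.e.\ must lie on the top facet $\{\psi=\eps\}\cap\sigma$, which is the unique facet of $\eps\Delta(\sigma)$ not containing $0$. Conversely, that conclusion directly recovers $\psi(\bdv)\ge\eps$ for every relevant primitive~$\bdv$.

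The one mildly delicate issue, which I expect to be the main bookkeeping obstacle, is handling the boundary case $\eps=1$ cleanly, so that the $\bdw_i$ — which are then genuine lattice points — are counted as vertices of $\eps\Delta(\sigma)$ and not misread as violations of emptiness or hollowness. Once this is made explicit, and the primitive-versus-non-primitive reduction is laid out once and used in both parts, the two equivalences drop out as direct translations of the discrepancy formula.
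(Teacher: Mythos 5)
Your proof is correct and follows essentially the same route as the paper: both reduce the statement to the toric discrepancy formula $e=\psi(\bdv)-1$ for the linear form $\psi$ with $\psi(\bdw_i)=1$ (the paper derives it via support functions on a toric resolution, you quote it as standard) and then translate $e>-1+\eps$, resp.\ $e\ge -1+\eps$, into emptiness, resp.\ the hollowness condition, of $\eps\Delta(\sigma)$. Your explicit handling of non-primitive lattice points and of the $\eps=1$ boundary case supplies detail the paper compresses into ``the result follows at once''; the only phrasing to tighten is the claim that \emph{every} divisorial valuation with small centre is toric --- what is actually needed (and what the paper uses) is that discrepancies may be computed on a single toric log resolution, so only toric valuations need be checked.
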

\begin{proof}
$X_\sigma$ is $\eps$-log canonical if and only if for some (hence any)
  birational morphism $f\colon Y\to X_\sigma$ with $Y$ smooth, the
  discrepancies $e_j$ defined by $K_Y-f^*K_X=\sum_je_jE_j$ (with $E_j$
  being $f$-exceptional prime divisors) satisfy $e_j\ge -1+\eps$. To
  check this, consider a toric resolution $f\colon Y=Y_\Sigma\to
  X_\sigma$ obtained by subdividing $\sigma$ into a regular fan
  $\Sigma$. The exceptional divisors are given by some rays $\rho_j$
  spanned by primitive $\bdr_j\in\Lambda$. The $\QQ$-divisors $K_Y$
  and $f^*K_{X_\sigma}$ are given by support functions $h_Y$ and
  $h_{X_\sigma}$ as in \cite[Proposition~2.1(v)]{Od}. The function
  $h_Y$ satisfies $h_Y(\bdr_j)=h_Y(\bdw_i)=1$, while $h_{X_\sigma}$ is
  linear and is determined by $h_{X_\sigma}(\bdw_i)=0$. Therefore
  $e_j=-1+h_{X_\sigma}(\bdr_j)$, so in part~(b) we have
  $h_{X_\sigma}(\bdr)\ge \eps$, for all $\bdr\in\Lambda$. The result
  follows at once from this: part~(a) is identical, replacing $e_j\ge
  -1+\eps$ by $e_j> -1+\eps$.
\end{proof}

In particular, since canonical is the same as $1$-log canonical,
$X_\sigma$ has $\QQ$-factorial canonical singularities if and only if
$\Delta(\sigma)$ is a hollow simplex with $\Delta(\sigma)\cap \Lambda
\minus \{0\}$ contained in the facet opposite to the origin.

Any nonnegative primitive integer vector
$\bdn=(n_1,\ldots,n_d)\in\NN^d$ induces a weighted blowup
$\AA^d_\bdn$, which is the toric variety associated with the fan in
$\RR^d$ (and the lattice $\ZZ^d$) that consists of all the faces of
the cones $\sigma_\bdn^j=\RRp\bdn+\sum_{i\neq j}\RRp\bde_i$. Note that
all such faces are contained in $\RRp^d$, and that the $\sigma_\bdn^j$
are simplicial so $\AA^d_\bdn$ always has $\QQ$-factorial
singularities.

The standard simplex in $\RR^d$ is
$\Delta:=\Delta(\RRp^d)=\Conv(\{0,\bde_1,\ldots,\bde_d\})$ and its
interior is denoted $\Delta^\circ$. That is,
\[
\Delta^\circ=\{\bdx\in\RR^d\mid \Ssum x_i < 1 \text{ and } \forall i \, x_i>0\}.
\]
The facet of $\Delta$ opposite to the origin, which is
$\Conv(\{\bde_1,\ldots,\bde_d\})$, is denoted by $\Delta_1$.

For any non-zero $\bdn\in \NN^d$ we set
$\Delta_\bdn=\Conv(\{\bde_1,\ldots,\bde_d,\bdn\})$.

\begin{proposition}\label{prop:onecone}
For $\eps \in (0,1]$
  \begin{enumerate}
  \item[(a)] $\AA^d_\bdn$ has $\eps$-log terminal singularities if
    and only if $\eps \Delta_\bdn$ is empty.
  \item[(b)] $\AA^d_\bdn$ has $\eps$-log canonical singularities
    if and only if $\eps \Delta_\bdn$ is hollow.
  \end{enumerate}  
\end{proposition}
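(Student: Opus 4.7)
\medskip
\noindent\textbf{Proof proposal.}
My plan is to apply Lemma~\ref{lem:terminal} to each maximal cone $\sigma_\bdn^j$ of the fan of $\AA^d_\bdn$ and then combine the $d$ resulting polytope conditions on $\eps\Delta(\sigma_\bdn^j)$ into a single condition on $\eps\Delta_\bdn$. Since $\eps$-log terminality and $\eps$-log canonicity are local on $\AA^d_\bdn$, it has the relevant property iff every affine chart $X_{\sigma_\bdn^j}$ does. By Lemma~\ref{lem:terminal} this amounts to saying that each $\eps\Delta(\sigma_\bdn^j) = \Conv(\{0,\eps\bdn\}\cup\{\eps\bde_i:i\neq j\})$ is empty (resp.\ hollow with its nonzero lattice points lying in the facet $\eps T_j$ opposite the origin), where $T_j := \Conv(\{\bdn\}\cup\{\bde_i:i\neq j\})$ is also the facet of $\Delta_\bdn$ opposite $\bde_j$.

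Geometrically, the simplices $\Delta(\sigma_\bdn^j)$ glue along $\Conv(\{0,\bdn\})$ to tile the polytope $\Pi_\bdn := \Conv(\{0,\bde_1,\dots,\bde_d,\bdn\})$, which equally decomposes as $\Delta\cup\Delta_\bdn$ along the common facet $\Delta_1$. The only lattice points in $\eps\Delta$ are $0$ and, when $\eps=1$, the standard basis vectors $\bde_i$.

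The main obstacle is the following claim: \emph{any nonzero lattice point $\bdx\in\eps\Delta(\sigma_\bdn^j)$ actually lies in $\eps\Delta_\bdn$}. I would prove it by starting from an expression $\bdx = \eps(\lambda\bdn + \Sum_{i\neq j}\mu_i\bde_i)$ with $\lambda,\mu_i\ge 0$ and $\lambda + \Sum_{i\neq j}\mu_i \le 1$, and solving directly for new barycentric coordinates $\lambda',\mu'_i$ in $\eps\Delta_\bdn$. The unique candidate turns out to be
\[
\lambda' = \frac{\Ssum x_k - \eps}{\eps(\Ssum n_k - 1)}, \qquad \mu'_i = \frac{x_i}{\eps} - \lambda' n_i.
\]
Nonnegativity of $\lambda'$ uses only $\Ssum x_k \ge 1 \ge \eps$ (since $\bdx\in\ZZ^d\cap\RRp^d\setminus\{0\}$); the inequality $\lambda'\le \lambda$, which comes from $\lambda + \Sum_{i\neq j}\mu_i\le 1$, then forces $\mu'_i\ge 0$ for every $i$. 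This direct coordinate computation is elementary but it is the main point where real work is required.

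Granted the claim, both parts follow quickly. For the direction $(\Rightarrow)$, a nonzero lattice point $\bdx$ of $\eps\Delta(\sigma_\bdn^j)$ lies in $\eps\Delta_\bdn$ and so, by hypothesis, is a vertex of $\eps\Delta_\bdn$ (in~(a)) or lies on some facet of $\eps\Delta_\bdn$ (in~(b)). A short case analysis, using that $\eps\bde_j\notin\sigma_\bdn^j$ and that $\bdx\in\eps T_k\cap\eps\Delta(\sigma_\bdn^j)$ with $k\neq j$ forces $\bdx\in\eps T_j\cap\eps T_k$, identifies $\bdx$ as a vertex of $\eps\Delta(\sigma_\bdn^j)$ in case~(a) or as a point of $\eps T_j$ in case~(b). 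The direction $(\Leftarrow)$ is immediate because $\eps\Delta_\bdn \subset \eps\Pi_\bdn = \bigcup_j \eps\Delta(\sigma_\bdn^j)$, so any lattice point of $\eps\Delta_\bdn$ inherits the required vertex or boundary condition from the relevant chart.
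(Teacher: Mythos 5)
Your proof is correct and takes essentially the same route as the paper: both reduce to Lemma~\ref{lem:terminal} chart by chart and then observe that the nonzero lattice points of $\bigcup_j\eps\Delta(\sigma_\bdn^j)$ are exactly those of $\eps\Delta_\bdn$ (together with the harmless $\eps\bde_i$ when $\eps=1$), matching up the vertex/facet conditions. The only difference is presentational: where the paper simply asserts the decomposition $\bigcup_j\eps\Delta(\sigma_\bdn^j)=\eps\Delta\cup\eps\Delta_\bdn$, you verify the key containment by an explicit barycentric-coordinate computation.
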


\begin{proof}
  \begin{itemize}
    \item[(a)] The singularities of $\AA^d_\bdn$ are $\eps$-log
      terminal if and only if all the polytopes $\eps
      \Delta(\sigma_\bdn^j)$ are empty: that is, if $\bigcup_{j=1}^d
      \eps \Delta(\sigma_\bdn^j)$ is empty.  But
 \begin{eqnarray*}
   \bigcup_{i=1}^n \eps\Delta_{\sigma_\bdn^i}&=&\eps\Conv(0,\bde_1,\ldots,\bde_d,\bdn)\\
     &=& \eps\Conv(0,\bde_1,\ldots,\bde_d)\cup
   \eps\Conv(\bde_1,\ldots,\bde_d,\bdn)\\
   &=& \eps\Delta \cup \eps\Delta_\bdn
 \end{eqnarray*}
  and $\eps\Conv(\{0,\bde_1,\ldots,\bde_d\})$ is empty anyway.
\item[(b)] All lattice points of
  $\bigcup_{i=1}^n\eps\Delta(\sigma_\bdn^i)$ other than the origin lie
  in $\eps\Delta_\bdn$ by construction. Hence they all lie in facets
  not containing the origin if and only if they do not lie in the
  interior of $\eps\Delta_\bdn$ or in $\eps\Delta_\bdn\cap \eps\Delta
  = \eps\Conv(\{\bde_1,\ldots,\bde_d\})=\eps\Delta_1$. The latter is
  empty, and except for the trivial case $\eps=1$ has no lattice
  points among its vertices either.
\end{itemize}
\end{proof}

The following change of coordinates sends the simplex $\Delta_\bdn$ of
Proposition~\ref{prop:onecone} to the standard simplex $\Delta$, which
will be useful for us.

\begin{lemma}\label{lem:change}
Let $\bdn=(n_1,\dots,n_d) \in \RRp^d$ be a non-negative vector with
$\Ssum n_i >1$.  Then the unique affine-linear transformation sending
$\bdn$ to the origin and fixing all of $\bde_1,\dots,\bde_d$ sends the
origin to $\bdn/(\Vn)$.
\end{lemma}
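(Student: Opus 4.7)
The plan is to exploit the special form that an affine map fixing a hyperplane pointwise must take. First I would verify existence and uniqueness of the transformation: the condition $\Ssum n_i > 1$ ensures $\bdn$ does not lie on the affine hyperplane $H=\{\bdx\in\RR^d\mid \Ssum x_i = 1\}$, which is precisely the affine span of $\bde_1,\ldots,\bde_d$. Hence $\bdn,\bde_1,\ldots,\bde_d$ form an affine basis of $\RR^d$, and an affine-linear map is uniquely determined by its values on them.

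Next I would observe that since $T$ fixes the affine basis $\bde_1,\ldots,\bde_d$ of $H$, it fixes every point of $H$ pointwise. Therefore $T(\bdx)-\bdx$ is an affine function of $\bdx$ vanishing on $H$, so it must be a scalar multiple (by a \emph{vector}) of the linear form $1-\Ssum x_i$ that cuts out $H$. In other words, there exists a unique $\bdv\in\RR^d$ with
\[
T(\bdx) = \bdx + \bigl(1-\Ssum x_i\bigr)\bdv.
\]

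The remaining step is to identify $\bdv$. Plugging in $\bdx=\bdn$ and using $T(\bdn)=0$ gives $\bdn + (1-\Ssum n_i)\bdv = 0$, so $\bdv = \bdn/(\Ssum n_i - 1) = \bdn/(\Vn)$. Plugging $\bdx=0$ into the displayed formula then yields $T(0) = \bdv = \bdn/(\Vn)$, as desired. There is no real obstacle here; the only thing to check carefully is that the hypothesis $\Ssum n_i > 1$ (rather than just $\neq 1$) is used only to guarantee the denominator is positive and the map is well-defined, which matches the later geometric use of the lemma.
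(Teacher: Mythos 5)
Your proof is correct, but it takes a genuinely different route from the paper's. The paper's argument is a one-liner via affine dependences: among the $d+2$ points $\{0,\bde_1,\dots,\bde_d,\bdn\}$ there is a unique (up to scalar) affine dependence, with coefficient vector $(1-\Ssum n_i, n_1,\dots,n_d,-1)$, and the same coefficients give an affine dependence among $\{\bdn/(\Vn),\bde_1,\dots,\bde_d,0\}$; since affine maps preserve affine dependences and the images of $\bde_1,\dots,\bde_d,\bdn$ are prescribed, the image of $0$ is forced to be $\bdn/(\Vn)$. You instead derive the structural normal form $T(\bdx)=\bdx+\bigl(1-\Ssum x_i\bigr)\bdv$ for any affine map fixing the hyperplane $H_1=\{\Ssum x_i=1\}$ pointwise, and then solve for $\bdv$ by evaluating at $\bdn$. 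Your approach is slightly longer but buys two things the paper leaves implicit: an explicit verification that the transformation exists and is unique (because $\bdn,\bde_1,\dots,\bde_d$ form an affine basis precisely when $\Ssum n_i\neq 1$), and a closed formula for $T$ itself rather than just for $T(0)$. The paper's approach is more compact and makes transparent why the same coefficient vector reappears in Lemma~\ref{lem:dim1general}. Both arguments are complete; your closing observation that only $\Ssum n_i\neq 1$ is needed for the algebra, with positivity mattering only for the later geometric use, is accurate.
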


\begin{proof}
The unique (modulo multiplication by a scalar) affine dependences
among $\{0, \bde_1,\dots,\bde_d, \bdn\}$ and among $\{\bdn/(\Vn),
\bde_1,\dots,\bde_d, 0\}$ are the same one: its coefficients are
$(1-\Ssum n_i ,n_1,\dots,n_d,-1)$.
\end{proof}

\begin{corollary}\label{coro:onecone}
Let $\bdn\in \NN^d$. Define $V=\Vn$ and $\bdp = \frac{1}{V}\bdn\in
\QQ^d$.  Let $\Lambdap=\ZZ^d+\ZZ\bdp$ be the lattice generated by
$\bdp$ and $\ZZ^d$.  Then, for any $\eps\in (0,1]$:
  \begin{enumerate}
  \item[(a)] $\AA^d_\bdn$ has $\eps$-log terminal singularities if
    and only if $\Deltape=\bdp + \eps(\Delta-\bdp)$ is empty with
    respect to the lattice $\Lambdap$.
  \item[(b)] $\AA^d_\bdn$ has $\eps$-log canonical singularities
    if and only if $\Deltape$ is hollow with respect to the lattice
    $\Lambdap$.
  \end{enumerate}  
\end{corollary}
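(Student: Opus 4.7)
The plan is to combine Proposition~\ref{prop:onecone} with Lemma~\ref{lem:change} via an affine change of coordinates on $\RR^d$. I let $T\colon\RR^d\to\RR^d$ be the affine bijection provided by Lemma~\ref{lem:change}, which fixes each $\bde_i$ and sends $\bdn$ to $0$ (and hence $0$ to $\bdp$). Writing $T(\bdx) = A\bdx + \bdp$ with $A$ the linear part, this says $A\bde_i = \bde_i-\bdp$ for each $i$. My goal is to show that $T$ restricts to a bijection $\eps\Delta_\bdn\cap\ZZ^d\to\Deltape\cap\Lambdap$ that takes vertices to vertices and interior to interior; Proposition~\ref{prop:onecone} will then deliver both parts of the corollary at once.

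The first step is to check that $T$ sends $\eps\Delta_\bdn$ onto $\Deltape$. Since affine maps preserve convex hulls, it suffices to match up vertices: I compute $T(\eps\bde_i) = \bdp+\eps(\bde_i-\bdp)$ directly, while $A\bdn = \Ssum n_i(\bde_i-\bdp) = \bdn - (V+1)\bdp = -\bdp$, using $\bdn = V\bdp$ together with $\Ssum n_i = V+1$, and hence $T(\eps\bdn) = \bdp+\eps(0-\bdp)$. These are exactly the vertices of $\Deltape=\bdp+\eps(\Delta-\bdp)$.

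The second and slightly more delicate step is the lattice identification $T(\ZZ^d)=\Lambdap$. One inclusion is immediate since each $A\bde_i = \bde_i-\bdp$ lies in $\Lambdap$. For the reverse, the computation $A\bdn=-\bdp$ already obtained shows that $\bdp\in A(\ZZ^d)$, and then $\bde_i = A\bde_i+\bdp\in A(\ZZ^d)$ as well, giving $\Lambdap\subseteq A(\ZZ^d)$. Translating by $\bdp\in\Lambdap$ yields $T(\ZZ^d)=A(\ZZ^d)+\bdp=\Lambdap$.

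With these two facts in hand, the corollary follows at once: an affine bijection takes vertices to vertices and interior to interior, so $\eps\Delta_\bdn$ is empty (respectively hollow) with respect to $\ZZ^d$ if and only if $\Deltape$ is empty (respectively hollow) with respect to $\Lambdap$, and Proposition~\ref{prop:onecone} converts these polytope conditions back to the singularity conditions. There is no real obstacle to this approach; the only point that requires some genuine attention is the lattice identification in the third paragraph, where the relation $\bdn=V\bdp$ is used essentially to obtain $\bdp$ inside $A(\ZZ^d)$.
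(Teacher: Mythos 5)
Your proof is correct and takes essentially the same route as the paper: the paper's own proof is the one-line remark that the corollary is Proposition~\ref{prop:onecone} rephrased via the change of coordinates of Lemma~\ref{lem:change}. You have simply made explicit the details that the paper leaves implicit, namely the vertex matching $T(\eps\bdn)=\bdp+\eps(0-\bdp)$, $T(\eps\bde_i)=\bdp+\eps(\bde_i-\bdp)$, and the lattice identification $T(\ZZ^d)=\Lambdap$, all of which check out.
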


\begin{proof}
This is just Proposition~\ref{prop:onecone}, rephrased via the change
of coordinates of Lemma~\ref{lem:change}. The notation here will be
used more widely: see Definition~\ref{def:phollow} below.
\end{proof}

\section{$\eps$-log canonical singularities}
\label{sec:mainproof}

This section is devoted to the proof of Theorem~\ref{thm:Birkar}.

\subsection{Lawrence's Theorem and hollow points}\label{subsect:Lawrence}

Apart from the relation between $\eps$-log canonical singularities
and hollow simplices described in Corollary~\ref{coro:onecone}, our
main technical tool is the following result of Jim Lawrence (see also
\cite{Bor}).

\begin{theorem}[Lawrence~\protect{\cite[Theorem~1]{La}}]
  \label{thm:families}
Fix $d\in \NN$ and an open subset $U\subset \RR^d$, and let $\GG$
be a closed subgroup of $\RR^d$ containing $\ZZ^d$. Then there are
only finitely many maximal subgroups $G<\GG$ such that
$\ZZ^d\subset G$ and $G\cap U=\nullset$.
\end{theorem}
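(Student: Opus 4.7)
The plan is to reduce the theorem to a finiteness statement about the compact torus $\TT^d$, and then address it via Pontryagin duality and induction on $d$. The first observation is that every maximal $G$ must already be closed in $\RR^d$: if $G$ is a subgroup of $\GG$ with $\ZZ^d\subset G$ and $G\cap U=\nullset$, then its closure $\bar G$ also avoids $U$ (any $x\in\bar G\cap U$ would, by openness, be approximated by points of $G\cap U$, which is empty), while still containing $\ZZ^d$ and lying in the closed group $\GG$. Maximality therefore forces $G=\bar G$.

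Passing to the quotient $\TT^d=\RR^d/\ZZ^d$ turns the problem into one about closed subgroups of $\bar\GG:=\GG/\ZZ^d$: the image $\bar U$ of $U$ in $\TT^d$ is open, and subgroups of $\GG$ containing $\ZZ^d$ correspond bijectively to subgroups of $\bar\GG$, with the condition $G\cap U=\nullset$ becoming the condition that the corresponding subgroup of $\bar\GG$ misses $\bar U$. Pontryagin duality now parametrizes closed subgroups of $\TT^d$ by saturated subgroups $K$ of the character lattice $\ZZ^d$, via
\[
H_K=\{x\in\TT^d:\chi(x)=0\text{ for all }\chi\in K\},
\]
in an inclusion-reversing way. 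Thus a maximal closed $H\subset\bar\GG$ avoiding $\bar U$ corresponds to a minimal saturated $K$ containing the annihilator of $\bar\GG$ such that for each $x\in\bar U\cap\bar\GG$ some $\chi\in K$ satisfies $\chi(x)\ne 0$.

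It remains to count these minimal $K$. I would proceed by induction on $d$. The case $d=1$ is direct, since a finite subgroup $\frac{1}{n}\ZZ/\ZZ$ of $\TT^1$ avoids a non-empty open interval only for $n$ bounded by the reciprocal of its length. For the inductive step, given a minimal $K$ pick any primitive $\chi\in K$; then $H_K\subset H_{\ZZ\chi}$, and $H_{\ZZ\chi}$ is a finite union of parallel translates of a $(d-1)$-dimensional subtorus. The induced open set on each translate is an instance of the same problem in dimension $d-1$, to which the inductive hypothesis applies; summing over the finitely many translates gives a bound on how $K$ can extend beyond $\ZZ\chi$. The main obstacle is organising this induction so that one really obtains finitely many $K$ overall: the disconnected-component bookkeeping and the apparent arbitrariness in the choice of $\chi$ both require careful treatment. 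An alternative and perhaps cleaner route uses compactness of the Chabauty space of closed subgroups of $\TT^d$, arguing that any infinite sequence of distinct maximal avoiders would have a convergent subsequence whose limit is itself a closed subgroup of $\bar\GG$ avoiding $\bar U$, and comparing this limit with elements of the sequence then contradicts their maximality.
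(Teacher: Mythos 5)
A preliminary remark: the paper does not prove this statement at all --- it is quoted directly from Lawrence \cite{La} --- so there is no internal proof to measure your attempt against, and it has to stand on its own.

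As it stands it does not. The preliminary reductions are fine: maximal avoiders are automatically closed (the closure of a subgroup missing the open set $U$ still misses $U$, still contains $\ZZ^d$, and still lies in the closed group $\GG$), and because every group under consideration contains $\ZZ^d$ the problem does descend faithfully to the torus $\TT^d=\RR^d/\ZZ^d$. But two things then go wrong. First, Pontryagin duality matches closed subgroups of $\TT^d$ with \emph{all} subgroups $K$ of the character lattice $\ZZ^d$, not only the saturated ones: saturated $K$ correspond exactly to subtori, whereas the maximal avoiders relevant here are typically disconnected (in the application, the groups $\Lambda_\bdp$ are finite modulo $\ZZ^d$ for rational $\bdp$). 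This is repairable by deleting ``saturated'', but it signals that the disconnected case --- where all the difficulty sits --- has not been engaged. Second, and decisively, the inductive step is not carried out: after choosing a primitive $\chi\in K$ and passing to $\ker\chi$, the entire content of the theorem is that only finitely many primitive characters $\chi$ can occur as $K$ ranges over the minimal annihilators, and nothing in your argument bounds that set; your own admission that the ``apparent arbitrariness in the choice of $\chi$'' still ``requires careful treatment'' concedes that the crux is unproved. The fallback via Chabauty compactness is likewise incomplete as stated: a Chabauty limit $G$ of distinct maximal avoiders $G_i$ is again an avoider, but Chabauty convergence only gives $G_i\subseteq G+V$ for every neighbourhood $V$ of $0$ and all large $i$, not an inclusion between $G$ and $G_i$ in either direction, and since $\dist(G,U)$ may be zero one cannot conclude that $\overline{G+G_i}$ avoids $U$; so ``comparing the limit with elements of the sequence'' yields no contradiction with maximality without a substantial further argument. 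What you have is a sensible reduction plus an accurate diagnosis of where the difficulty lies, not a proof of the finiteness that constitutes the theorem.
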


In other words, any subgroup of $\GG$ that contains $\ZZ^d$ and misses
$U$ is contained in one (at least) of finitely many such subgroups of
$\GG$.

These maximal subgroups $G$ are automatically closed. Hence $G$ is a
Lie subgroup of $\RR^d$, and its identity component, which we call
$L$, is a linear subspace of dimension equal to $\dim G$. Some of the
groups containing $\ZZ^d$ that we consider below are not closed,
however.

The relation to our problem comes from the fact that the lattice
$\Lambdap$ in Corollary~\ref{coro:onecone} is a subgroup of $\RR^d$
containing $\ZZ^d$. This implies, for example, that taking
$U=\Delta^\circ$, we may interpret the case $\eps=1$ of
Corollary~\ref{coro:onecone}(b) as saying that if $\AA^d_\bdn$ has
only canonical singularities then $\bdp$ lies in one of finitely many
subgroups of $\RR^d$ containing $\ZZ^d$ and not intersecting
$\Delta^\circ$.

Our aim is to extend this approach to any value of $\eps\in(0,1]$. We
first extend the notation introduced in Corollary~\ref{coro:onecone},
using Definition~\ref{def:empty}.

\begin{definition}\label{def:phollow}
We define 
\[
\outside := \RRp^d\minus \Delta = \{\bdx\in \RR^d \mid \Ssum x_i >1
\text{ and } \forall i\, x_i\ge 0\}.
\]
For each point $\bdp\in \outside$:
\begin{enumerate}
\item[(a)] We call the number $V:= \frac1{-1+\Ssum p_i}\in \RRp$ the
  \emph{index} of $\bdp$. The entries of the vector $\bdn:=V\bdp\in
  \RRp^d$ are called the \emph{weights} of $\bdp$, and the smallest of
  them is called the \emph{smallest weight} $n_{\min}=n_{\min}(\bdp)$
  of $\bdp$. 
\item[(b)] We put $\Deltape= \bdp + \eps(\Delta-\bdp)$ and
  $\Lambdap=\ZZ^d+\ZZ\bdp$.
\item[(c)] We say that $\bdp$ \emph{is $\eps$-hollow} if $\Deltape$ is
  hollow with respect to the group $\Lambdap$.
\end{enumerate}
\end{definition}

The notation in Definition~\ref{def:phollow}(a) is compatible with the notation of
Corollary~\ref{coro:onecone} because
\[
-1+\Ssum n_i=-1+V\Ssum p_i=-1+V\left(\frac{1}{V}+1\right)=V,
\]
but at this stage we do not require the weights to be integers: $V$
and $\bdn$ need not even be rational, so the group $\Lambdap$ may not
be a lattice.

Observe that $\Deltape$ is $\Delta$ shrunk towards $\bdp$ by a factor $\eps$,
so it is a simplex with facets parallel to the facets of $\Delta$.

\subsection{The canonical case of Birkar's conjecture}
\label{subsect:canonicalbirkar}

We let $H_0=\{\bdx\mid \Ssum x_i=0\}$ and $H_1=\{\bdx\mid \Ssum
x_i=1\}$. Thus $H_1$ is the affine hyperplane containing $\Delta_1$
and $H_0$ is the linear hyperplane parallel to it.  Let
$\Delta_1^\circ$ denote the relative interior of $\Delta_1$.

Fix a linear subspace $L\subset \RR^d$, of codimension~$k$. Assuming
that $L\not\subseteq H_0$ we are going to prove a bound $\ell_L$,
depending only on $L$, for the minimum weight of every point $\bdp\in
\outside$ such that $L+\bdp$ does not meet $\Delta_1^\circ$.

For this, let $\pi_L\colon \RR^d \to \RR^d/L\cong \RR^k$ be the
canonical projection along $L$, let $\bds_i = \pi_L(\bde_i)$, and let
$S=\{0,\bds_1,\dots, \bds_d\}$, so that $\Conv(S) =
\pi_L(\Delta)$. The condition $L\not\subseteq H_1$ implies that no
affine hyperplane in $\RR^d/L$, in particular no facet of $\Conv(S)$,
contains $\{\bds_1,\dots,\bds_d\}$. This makes the minimum in the
following statement well-defined.

\begin{proposition}\label{prop:facetbound}
Suppose that $L\subseteq \RR^d$ is a linear subspace not contained in
$H_1$. For each facet-supporting hyperplane $H$ of $\pi_L(\Delta)$ let
\[
\ell_H:= \min_{\bds_i\not\in H} \frac{\dist(H,0)}{\dist(H,\bds_i)},
\]
and let $\ell_L=\max_H \ell_H$.  Then every point $\bdp\in \outside$
such that $\bdp+L$ does not meet $\Delta_1^\circ$ has $n_{\min}(\bdp)\le
\ell_L$.
\end{proposition}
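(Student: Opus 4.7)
The plan is to project everything to $\RR^d/L\cong \RR^k$ and identify, for each point $\bdp$ satisfying the hypothesis, a facet $H$ of $\Conv(S)=\pi_L(\Delta)$ and an index $i^*$ such that $n_{i^*}\le \ell_H$. Write $\bdq=\pi_L(\bdp)=\Ssum p_i\bds_i$ and $\bar q=\bdq/\Ssum p_j$, so that $\bar q$ is a convex combination of the $\bds_i$ and lies in $\pi_L(\Delta_1)=\Conv(\bds_1,\dots,\bds_d)$. The hypothesis $(\bdp+L)\cap \Delta_1^\circ=\nullset$ translates, via $\pi_L$, to $\bdq\notin \pi_L(\Delta_1)^\circ$. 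I may assume every $p_i>0$ (else $n_{\min}(\bdp)=0\le \ell_L$ trivially); then $\bar q$ lies in the relative interior of $\pi_L(\Delta_1)$, which by the affine spanning assumption on $\{\bds_i\}$ equals the topological interior in $\RR^k$ and is contained in the interior of $\Conv(S)$.

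Parametrise the ray from the origin through $\bar q$ as $t\mapsto t\bar q$: it enters $\pi_L(\Delta_1)$ at some $t_0<1$ and exits at some $t_1>1$. Since $\bdq=(\Ssum p_j)\bar q$ is forced outside $\pi_L(\Delta_1)^\circ$ while $\Ssum p_j>1>t_0$, we must have $\Ssum p_j\ge t_1$. The key geometric step is to show that the ray exits $\Conv(S)$ at exactly $t=t_1$, on a facet $H$ not containing the origin. For the first claim, any $t\bar q\in \Conv(S)$ decomposes as $(1-\lambda)\bdy$ with $\bdy\in \pi_L(\Delta_1)$ and $\lambda\in[0,1)$; the possibility $\lambda=1$ is excluded because $\bar q\ne 0$ (if $\bdp\in L$ then $\bdp/\Ssum p_j\in L\cap\Delta_1^\circ$, contradicting the hypothesis). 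Then $\bdy=\tfrac{t}{1-\lambda}\bar q$ is a point of $\pi_L(\Delta_1)$ on the ray, forcing $t\le t_1$. For the second claim, if $0\in H$ the segment from $0$ to $t_1\bar q$ would lie in $H$, so the interior point $\bar q$ of $\Conv(S)$ would belong to the facet $H\cap \Conv(S)$ --- impossible.

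With such an $H$ in hand, let $f$ be linear with $\Conv(S)\subseteq \{f\le c\}$ and $H=\Conv(S)\cap\{f=c\}$, so that $c=t_1 f(\bar q)>0$ and $f(\bdq)=(\Ssum p_j/t_1)c\ge c$. Setting $\alpha_i=c-f(\bds_i)\ge 0$ (with equality exactly when $\bds_i\in H$), the inequality $f(\bdq)=\Ssum p_i f(\bds_i)\ge c$ rearranges to
\[
\Ssum p_i\alpha_i\;\le\;c\bigl(\Ssum p_j-1\bigr).
\]
By the hypothesis that no facet of $\Conv(S)$ contains all the $\bds_i$, the maximum $\alpha_{\max}=\max_i\alpha_i$ is positive and is attained at some $i^*$ with $\bds_{i^*}\notin H$. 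Dropping every term on the left except the one indexed by $i^*$ yields $p_{i^*}\le \tfrac{c}{\alpha_{\max}}(\Ssum p_j-1)$, and therefore
\[
n_{\min}(\bdp)\;\le\;V p_{i^*}\;\le\;\frac{c}{\alpha_{\max}}\;=\;\frac{\dist(H,0)}{\dist(H,\bds_{i^*})}\;=\;\ell_H\;\le\;\ell_L.
\]
The main obstacle is the facet-identification step: verifying that the exit of the ray through $\Conv(S)$ happens on a genuine facet disjoint from the origin, rather than along a facet through $0$. Once that is in place, the remaining algebraic manipulation is essentially forced by the choice of $H$.
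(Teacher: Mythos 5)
Your proof is correct and follows essentially the same route as the paper: project along $L$, produce a facet-supporting hyperplane $H$ of $\pi_L(\Delta)$ not containing the origin with $\pi_L(\bdp)$ on its far side, and convert the resulting inequality $\Ssum p_i\alpha_i\le c\,(\Ssum p_j-1)$ into the bound $n_{i^*}\le\dist(H,0)/\dist(H,\bds_{i^*})$ --- the paper writes this same inequality as $\Ssum(1-a_i)n_i\le 1$ after normalising the functional to equal $1$ on $H$. The only real difference is how the facet is found: the paper takes any facet weakly separating $\pi_L(\bdp)$ from $\pi_L(\Delta)$ and disposes of the degenerate case $0\in H$ by observing that some weight then vanishes, whereas your ray-shooting argument through $\bar q$ excludes that case by construction.
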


\begin{remark}
Let $k=d-\dim L$.  In $\RR^d/L \cong \RR^k$, an affine hyperplane $H$
is expressed as $H=\{\bdx\in \RR^k\mid f(\bdx)=c\}$, where $f\colon
\RR^k\to \RR$ is a linear functional.  For $\bdy\in\RR^k$, we define
the distance $\dist(H,\bdy)=|f(\bdy)-c|$. This depends on the choice
of $f$, which is only unique up to a scalar and, implicitly, on the
choice of isomorphism $\RR^d/L \cong \RR^k$.  But in the statement of
Proposition~\ref{prop:facetbound} and the rest of this section we only
consider \emph{ratios} of two distances, which do not depend on
choice. In Section~\ref{sect:lowdim} we shall need to be more
definite.
\end{remark}

\begin{proof}
Since $(\bdp+L)\cap\Delta_1^\circ=\nullset$ and $\bdp\in \outside$,
we also have $(\bdp+L)\cap\Delta^\circ=\nullset$, and the point
$\pi_L(\bdp)$ is not in the interior of $\Conv(S)$. Hence there is a
facet-supporting hyperplane $H$ of $\Conv(S)$ that weakly separates
$\pi_L(\bdp)$ from $\Conv(S)$. Let $\Tilde H= \pi_L^{-1}(H)$, which is
a hyperplane weakly separating $L+\bdp$ from $\Delta$ (but is not
necessarily facet-supporting for $\Delta$).

If $0\in \Tilde H$ then, in order for $\bdp$ to be in $\outside$, one
of the coordinates of $\bdp$, hence one of the weights of $\bdp$, must
be zero. Thus we assume $0\not\in \Tilde H$ and we can find an
$\bda\in\RR^d$ such that $\Tilde{H}=\{\bdx\in \RR^d \mid
\bda.\bdx=1\}$, where $\bda.\bdx:=\sum_{i=1}^d a_i x_i$ is the usual
Euclidean inner product.
  
Since $\Tilde{H}$ weakly separates $\Delta$ from $\bdp$ we have
$\sum_i a_ip_i=\bda.\bdp\ge 1$ but $\bda.\bdx\le 1$ for every $\bdx\in
\Delta$; in particular, $a_i=\bda.\bde_i\le 1$ for every~$i$. Thus
\[
\sum_{i=1}^d(1-a_i)n_i = \sum_{i=1}^d n_i - V\sum_{i=1}^d a_i p_i \le
(V+1) - V = 1.
\]
Since the terms in the first sum are non-negative, $(1-a_i)n_i\le 1$
for every $i$.

Observe that $\dist(\Tilde{H},0)=1$ and
$\dist(\Tilde{H},\bde_i)=(1-\bda.\bde_i)$ so
\[
\frac{\dist(H,\bds_i)}{\dist(H, 0)}=
\frac{\dist (\Tilde{H},\bde_i)}{\dist(\Tilde{H}, 0)}=
1-a_i.
\]
Hence, for any $i$ with $\bds_i\not\in H$ (which exists, because
otherwise we would have $\Tilde H=\{\Ssum x_i =1\}=H_1$ and that would
imply $L\subset H_0$) we have
\[
n_i\le \frac1{1-a_i} =\frac{\dist (H,0)}{\dist(H, \bds_i)}.
\]
Thus $n_{\min}(\bdp)\le \ell_H$. This does not yet give a bound for
$n_{\min}(\bdp)$ because $H$ depends on $\bdp$, but $H$ is one of the
finitely many facet-supporting hyperplanes of $\pi_L(\Delta)$, so
$n_{\min}(\bdp)\le \max_H \ell_H = \ell_L$ as claimed.
\end{proof}

Although we give below a separate proof of the general case, it is
interesting to observe that Proposition~\ref{prop:facetbound} leads to
the following easy proof of the canonical case of
Theorem~\ref{thm:Birkar}.

\begin{proof}[Proof of Theorem~\ref{thm:Birkar} for $\eps=1$]
By Theorem~\ref{thm:families} there is a finite collection
$\{G_1,\ldots,G_t\}$ of closed subgroups of $\RR^d$ containing
$\ZZ^d$ and not meeting $\Delta^\circ$, such that any subgroup of
$\RR^d$ containing $\ZZ^d$ and not meeting $\Delta^\circ$ is contained
in one of them. We denote $L_j$ the identity component of $G_j$.

If $L_j\subset H_0$, then the quotient $G_j/(G_j\cap H_0) \cong
\pi_{H_0} (G_j)$ is a discrete subgroup of $\RR^d/H_0 \cong \RR$. Let
$y$ be the minimum of $\pi_{H_0} (G_j)$ in the interval $(1,\infty)$
and define $\ell_{G_j}=1/(-1+y)$.  Then the index (and hence each
weight) of every $\bdp \in G_j\cap \outside$ is bounded by
$\ell_{G_j}$.

If $L_j\not \subset H_0$, then Proposition~\ref{prop:facetbound}
applies, since $L_j+\bdp\subset G_j$ does not meet $\Delta^\circ$.
The proposition gives us an $\ell_{G_j}=\ell_{L_j}$ (depending only on
$L_j$) with $n_{\min}(\bdp)\le \ell_{G_j}$ for every $\bdp \in G_j\cap
\outside$.

We can then take $\ell_{1,d}=\max_{j=1,\dots, t}\ell_{G_j}$. Indeed,
let $\bdn \in \NN^d$ be such that $\AA^d_\bdn$ has only canonical
singularities. As above, let $V=\Vn$ and let $\bdp=\frac1V\bdn$, which
lies in $\outside$.  By Corollary~\ref{coro:onecone} the lattice
$\Lambda_\bdp= \ZZ^d+\ZZ\bdp$ does not meet $\Delta^\circ$ and is thus
contained in some $G_j$ from our list.  Thus, $n_{\min} =
n_{\min}(\bdp) \le \ell_{G_j} \le \ell_{1,d}$.
\end{proof}

\subsection{Local weight bound}\label{subsect:localwt}

In this section we examine the situation near a given point $\bdx$ of
$\Delta_1$ and show the following.

\begin{proposition}\label{prop:local_boundary}
Let $\eps\in (0,1]$ and $d\in \NN$ be fixed. Then, for each point
$\bdx \in \Delta_1$, there is a non-negative integer $\ell_\bdx \in
\NN$ and an open neighbourhood $W_\bdx$ of $\bdx$ in $\RR^d$, such
that if $\bdp\in \outside\cap W_\bdx$ is $\eps$-hollow then its
smallest weight $n_{\min}(\bdp)$ satisfies $n_{\min}(\bdp) \le \ell_\bdx$.
\end{proposition}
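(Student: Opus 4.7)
The plan is to apply Lawrence's Theorem~\ref{thm:families} in a local form, using continuity in $\bdp$ to convert the $\bdp$-dependent hollow condition into a fixed ``miss'' condition on a neighbourhood of~$\bdx$. First I would pick a nonempty open set $U\subset\RR^d$ whose closure is compactly contained in $\Deltaoe{\bdx}$, for instance a small Euclidean ball around the barycentre of $\Delta_{\bdx,\eps}$ (which is a full-dimensional simplex). Since $\bdp\mapsto \Deltape$ varies continuously with $\bdp$, the neighbourhood $W_\bdx$ can then be chosen small enough to ensure $U\subset\Deltaoe{\bdp}$ for every $\bdp\in W_\bdx$. For any $\eps$-hollow $\bdp\in W_\bdx\cap\outside$ the hollow condition gives $\Lambdap\cap\Deltaoe{\bdp}=\nullset$, and hence $\Lambdap\cap U=\nullset$.

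Second, Theorem~\ref{thm:families} applied to $U$ and $\GG=\RR^d$ produces a finite list $G_1,\dots,G_t$ of maximal subgroups of $\RR^d$ containing $\ZZ^d$ and disjoint from~$U$. Every $\eps$-hollow $\bdp\in W_\bdx\cap\outside$ has $\Lambdap\subset G_j$ for at least one $j$, so it is enough to produce a weight bound $\ell_{G_j}$ valid on each of the sets $G_j\cap W_\bdx\cap\outside$ (shrinking $W_\bdx$ further if necessary) and set $\ell_\bdx=\max_j \ell_{G_j}$.

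Third, for each $G_j$ I would repeat the dichotomy used in the canonical ($\eps=1$) case of Theorem~\ref{thm:Birkar}, based on whether the identity component $L_j$ of $G_j$ is contained in~$H_0$. If $L_j\subset H_0$, then $\pi_{H_0}(G_j)$ is a discrete subgroup of $\RR$ containing $\pi_{H_0}(\ZZ^d)=\ZZ$, hence of the form $\tfrac{1}{m_j}\ZZ$; this forces the index $V=1/(\Ssum p_i-1)$ to satisfy $V\le m_j$ on $G_j\cap\outside$, and together with the uniform bound on the coordinates of $\bdp\in W_\bdx$ it gives the desired bound on the weights $n_i=Vp_i$. If $L_j\not\subset H_0$, the plan is to apply Proposition~\ref{prop:facetbound} to $L_j$: the coset $\bdp+L_j$ lies in $G_j$ and hence misses~$U$, and one wants to upgrade this to the hypothesis $(\bdp+L_j)\cap\Delta_1^\circ=\nullset$ of that proposition.

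The main obstacle is exactly this upgrade. The strategy is to argue that if $\bdp+L_j$ did meet $\Delta_1^\circ$, then for $\bdp$ sufficiently close to $\bdx$ the sliding argument at the start of the proof of Proposition~\ref{prop:facetbound} would let us follow $\bdp+L_j$ past $H_1$ into $\Delta^\circ$ near~$\bdx$, producing a point of $G_j$ close to $\bdx$ and lying in $\Delta^\circ$. By a coordinated enlargement of $U$ inside $\Deltaoe{\bdp}$, adapted to the finitely many subspaces $L_j$, this forced point must land in $U$, contradicting $G_j\cap U=\nullset$. Making the choices of $U$ and $W_\bdx$ coherent across all the groups $G_j$ while preserving $U\subset\Deltaoe{\bdp}$ is the delicate part of the argument and is where I expect the bulk of the technical work to lie.
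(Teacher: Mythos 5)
Your overall architecture---Lawrence's Theorem~\ref{thm:families} to reduce to finitely many maximal groups, then the dichotomy on whether the identity component $L_j$ lies in $H_0$, with Proposition~\ref{prop:facetbound} handling the case $L_j\not\subset H_0$---is the same as the paper's, and your treatment of the two easy cases is fine. But the step you flag as ``the delicate part'' is the actual content of the proposition, and what you sketch for it does not close. The difficulty is that a maximal group $G_j$ disjoint from your fixed ball $U$ may well contain $\bdx$ and have $\bdx+L_j$ meeting $\Delta_1^\circ$, and hence (since $L_j\not\subset H_0$) meeting $\Deltaoe{\bdx}$, at points lying outside $U$. Then Proposition~\ref{prop:facetbound} is simply not applicable to $L_j$, and no amount of sliding along $\bdp+L_j$ yields a contradiction, because the resulting point of $G_j$ in $\Delta^\circ$ need not lie anywhere near your $U$. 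Your proposed remedy---enlarge $U$ so as to capture witnesses $\bdy_j\in(\bdx+L_j)\cap\Deltaoe{\bdx}$ for the bad groups---is circular as stated: the list of maximal groups is a function of $U$, and enlarging $U$ produces a new family with new maximal elements. A single enlargement does kill every bad group whose identity component \emph{equals} some $L_j$ (such a group would contain $\bdy_j$), but the new maximal groups sitting inside a bad $G_j$ can have strictly smaller identity components $L'$ with $\bdx+L'$ meeting $\Deltaoe{\bdx}$ at yet other points, so the problem recurs one dimension down.

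The paper breaks this circularity with a downward induction on the dimension of the groups (Lemma~\ref{lem:Lfiniteness}), run over a shrinking countable base of neighbourhoods $B_1\supset B_2\supset\cdots$ of $\bdx$. The key input is Lemma~\ref{lem:deltainteriors}, $\bigcup_{r}\Deltaoe{B_r}=\Deltaoe{\bdx}$, which guarantees that every witness $\bdy_j$ eventually lies in $\Deltaoe{B_r}$, so the offending groups drop out of $\cGe{B_r}$; each round strictly decreases the top dimension of the remaining non-$\eps$-hollow groups, and after at most $d$ rounds all maximal groups are $\eps$-hollow at $\bdx$. You would also need the short argument from Lemma~\ref{lemma:bound_for_G} showing that $(\bdx+L)\cap\Deltaoe{\bdx}=\nullset$ together with $L\not\subset H_0$ forces $(\bdx+L)\cap\Delta_1^\circ=\nullset$, which is the actual hypothesis of Proposition~\ref{prop:facetbound}. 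Without some version of this induction, your proof has a genuine gap at its central step.
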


To prove this we introduce the following notation. For each set $U$
with $\bdx\in U\subseteq \RR^d$ we define $\Deltae{U}=\bigcap_{\bdq\in
  U}\Deltae{\bdq}$, and we let $\cGe{U}$ be the family of all
subgroups of $\RR^d$ containing $\ZZ^d$ and not meeting
$\Deltae{U}^\circ$. Observe that
\begin{equation*}
U\supseteq U'
\qquad \Rightarrow \qquad
\Deltae{U}\subseteq\Deltae{U'}
\qquad \Rightarrow \qquad
\cGe{U} \supseteq \cGe{U'}.
\end{equation*}

We are interested in the case where $U$ is a neighbourhood of
$\bdx$.

\begin{lemma}\label{lem:deltainteriors}
Let $B_1 \supset B_2 \supset \dots$ be a countable base of
neighbourhoods of $\bdx$, so that $\bigcap_{r\in \NN} B_r = \{\bdx\}$.
Then $\bigcup_{r\in\NN}\Deltaoe{B_r}=\Deltaoe{\bdx}$.
\end{lemma}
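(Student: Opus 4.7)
The plan is to prove the two inclusions of $\bigcup_{r\in\NN} \Deltaoe{B_r} = \Deltaoe{\bdx}$ separately. The containment ``$\subseteq$'' is immediate from $\bdx \in B_r$ for every $r$, which gives $\Deltae{B_r} \subseteq \Deltae{\bdx}$, and this containment of polytopes passes to their interiors.

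For the reverse containment, I would exploit the explicit description $\Deltae{\bdq} = (1-\eps)\bdq + \eps \Delta$: membership $\bdz \in \Deltae{\bdq}$ is given by the $d+1$ linear inequalities $z_i \ge (1-\eps)q_i$ (for $i=1,\dots,d$) and $\Ssum z_i \le \eps + (1-\eps)\Ssum q_i$. Taking the intersection over $\bdq \in B_r$ simply replaces $q_i$ by $\sup_{\bdq\in B_r}q_i$ in the first batch of inequalities and $\Ssum q_i$ by $\inf_{\bdq\in B_r}\Ssum q_i$ in the last one. Since the $B_r$ form a base of neighbourhoods of $\bdx$ they are eventually bounded, so the suprema and infima are finite and $\Deltae{B_r}$ is (for $r$ large) a simplex with facets parallel to those of $\Deltae{\bdx}$.

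Now given any $\bdy \in \Deltaoe{\bdx}$, the strict inequalities $y_i > (1-\eps)x_i$ and $\Ssum y_i < \eps + (1-\eps)\Ssum x_i$ hold with positive slack. Since $\bigcap_r B_r = \{\bdx\}$ and the $B_r$ are nested, $\sup_{\bdq \in B_r} q_i$ decreases to $x_i$ and $\inf_{\bdq\in B_r}\Ssum q_i$ increases to $\Ssum x_i$ as $r\to\infty$. Hence for $r$ sufficiently large the corresponding strict inequalities defining $\Deltaoe{B_r}$ continue to hold at $\bdy$, placing $\bdy$ in $\Deltaoe{B_r}$. The one degenerate case is $\eps=1$, where $\Deltae{\bdq}=\Delta$ for every $\bdq$ and the statement is trivial; otherwise $(1-\eps)>0$ and the argument is a direct continuity check, so no serious obstacle arises.
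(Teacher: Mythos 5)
Your proof is correct, and it takes a somewhat more explicit route than the paper's. The paper argues topologically: for a fixed $\bdy\in\Deltaoe{\bdx}$ the set $\{\bdz\mid\bdy\in\Deltaoe{\bdz}\}=\{\bdz\mid\bdy-(1-\eps)\bdz\in\eps\Delta^\circ\}$ is open and contains $\bdx$, hence contains $B_r$ for $r$ large. You instead compute $\Deltae{B_r}$ as the explicit system $z_i\ge(1-\eps)\sup_{\bdq\in B_r}q_i$ for each $i$ and $\Ssum z_i\le\eps+(1-\eps)\inf_{\bdq\in B_r}\Ssum q_i$, and let these thresholds converge to the corresponding data at $\bdx$. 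What the explicit version buys is a clean treatment of a point the paper's proof leaves implicit: knowing $\bdy\in\Deltaoe{\bdz}$ for every $\bdz\in B_r$ only gives $\bdy\in\bigcap_{\bdz\in B_r}\Deltaoe{\bdz}$, which for an infinite intersection is a priori larger than the interior $\Deltaoe{B_r}$ of $\Deltae{B_r}$; your half-space description, together with the observation that $\Deltae{B_r}$ is a full-dimensional simplex for large $r$ (so that the strict system does describe its interior), shows the two coincide here. One justification to tighten: the convergence $\sup_{\bdq\in B_r}q_i\downarrow x_i$ does not follow from nestedness plus $\bigcap_r B_r=\{\bdx\}$ alone --- nested neighbourhoods of $\bdx$ can have singleton intersection while retaining points far from $\bdx$ in every $B_r$ --- but it does follow from the base-of-neighbourhoods hypothesis (every ball about $\bdx$ contains some $B_r$), which is exactly the property you already invoked to get boundedness.
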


\begin{proof} The inclusion
  $\bigcup_{r\in\NN}\Deltaoe{B_r}\subseteq\Deltaoe{\bdx}$ is
  immediate. For the other direction, if $\bdy\in\Deltaoe{\bdx}$ then
  \begin{eqnarray*}
\bdx\in \{\bdz\mid \bdy\in \Deltaoe{\bdz}\}&=&\{\bdz\mid \exists
\bdw\in \eps\Delta^\circ \text{ such that }
\bdy=\bdz(1-\eps)+\bdw\}\\
&=&\{\bdz\mid \bdy-\bdz(1-\eps)\in \eps\Delta^\circ\},
  \end{eqnarray*}
which is open because $\eps\Delta^\circ$ is open and $\bdz\mapsto
\bdy-\bdz(1-\eps)$ is continuous.

Hence $\bdy\in\Deltaoe{\bdz}$ for all $\bdz$ in some neighbourhood of
$\bdx$, and in particular for all $\bdz\in B_r$ for some sufficiently
large~$r$. Hence $\bdy\in \bigcup_{r\in\NN}\Deltaoe{B_r}$.
\end{proof}

By analogy with Definition~\ref{def:phollow} we say that a closed
group $G$ with identity component $L$ is \emph{$\eps$-hollow at
  $\bdx$} if $G \cap (\bdx+L)\cap\Deltaoe{\bdx} = \nullset$.

Observe that this includes all closed groups with $\bdx\not\in G$,
since in this case $G \cap (\bdx+L)$ is already empty. Our next two
lemmas prepare the proof of Proposition~\ref{prop:local_boundary},
dealing separately with groups that are and are not $\eps$-hollow at
$\bdx$.

\begin{lemma}\label{lem:Lfiniteness}
Every $\bdx\in \Delta_1$  has an open neighbourhood $U_\bdx$
such that every closed group in $\cGe{U_\bdx}$ is $\eps$-hollow at $\bdx$.
\end{lemma}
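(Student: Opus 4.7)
My plan is to apply Theorem~\ref{thm:families} iteratively in combination with Lemma~\ref{lem:deltainteriors}: at each stage Lawrence gives a finite list of maximal closed subgroups in $\cGe{B_r}$, and those which fail to be $\eps$-hollow at $\bdx$ can be eliminated by shrinking $B_r$, because the witness point for their non-hollowness is eventually captured by the growing open set $\Deltaoe{B_r}$. First, fix a descending base $B_1\supset B_2\supset\cdots$ of neighbourhoods of $\bdx$ with $\bigcap_r B_r=\{\bdx\}$ and apply Theorem~\ref{thm:families} (with $\GG=\RR^d$) to $\Deltaoe{B_1}$, obtaining maximal closed subgroups $M_1^{(0)},\dots,M_{s_0}^{(0)}$ of $\RR^d$ containing $\ZZ^d$ and missing $\Deltaoe{B_1}$. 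For each $M_l^{(0)}$ that fails to be $\eps$-hollow at $\bdx$ we have $\bdx+L_l^{(0)}\subseteq M_l^{(0)}$ (since for any closed group $G$ with identity component $L$, the set $G\cap(\bdx+L)$ is either empty or all of $\bdx+L$); picking a witness $\bdy_l^{(0)}\in(\bdx+L_l^{(0)})\cap\Deltaoe{\bdx}$, Lemma~\ref{lem:deltainteriors} gives an $r_l$ with $\bdy_l^{(0)}\in\Deltaoe{B_{r_l}}$, and I set $R_1:=\max_l r_l$ over the offending maximals.

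Next I re-apply Theorem~\ref{thm:families} to $\Deltaoe{B_{R_1}}$. Because $\cGe{B_{R_1}}\subseteq\cGe{B_1}$, every new maximal $M_j^{(1)}$ is contained in some $M_l^{(0)}$, and therefore $L_j^{(1)}\subseteq L_l^{(0)}$. If $M_l^{(0)}$ was $\eps$-hollow at $\bdx$, then so is $M_j^{(1)}$: the inclusions $M_j^{(1)}\subseteq M_l^{(0)}$ and $\bdx+L_j^{(1)}\subseteq \bdx+L_l^{(0)}$ place $M_j^{(1)}\cap(\bdx+L_j^{(1)})\cap\Deltaoe{\bdx}$ inside the empty set $M_l^{(0)}\cap(\bdx+L_l^{(0)})\cap\Deltaoe{\bdx}$. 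If instead $M_l^{(0)}$ is not $\eps$-hollow and $M_j^{(1)}$ is also not $\eps$-hollow, then $L_j^{(1)}\subsetneq L_l^{(0)}$ strictly: equality would force $\bdx+L_l^{(0)}=\bdx+L_j^{(1)}\subseteq M_j^{(1)}$, hence $\bdy_l^{(0)}\in M_j^{(1)}\cap\Deltaoe{B_{R_1}}$, contradicting $M_j^{(1)}\in\cGe{B_{R_1}}$.

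Consequently the quantity $d_n:=\max\{\dim L_M\mid M\text{ is a maximal in the $n$-th Lawrence family that is not $\eps$-hollow at }\bdx\}$ strictly decreases from one stage to the next (or becomes $-\infty$ when no such maximal remains); since $0\le d_n\le d$, the iteration terminates after at most $d+1$ stages. At that terminal stage every maximal is $\eps$-hollow at $\bdx$, so taking $U_\bdx$ to be the corresponding $B_{R_n}$ proves the lemma: any closed $G\in\cGe{U_\bdx}$ is a subgroup of such a maximal, and the argument above shows that $\eps$-hollowness at $\bdx$ descends to closed subgroups. The main obstacle is the strict-descent step $L_j^{(1)}\subsetneq L_l^{(0)}$; it rests on the ``all-or-nothing'' behaviour of $G\cap(\bdx+L)$ for closed $G$, together with the careful choice of $R_1$ large enough to trap every witness $\bdy_l^{(0)}$ inside $\Deltaoe{B_{R_1}}$.
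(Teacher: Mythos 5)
Your proof is correct and follows essentially the same strategy as the paper's: iterate Lawrence's theorem, use Lemma~\ref{lem:deltainteriors} to capture a witness point for each non-$\eps$-hollow maximal inside $\Deltaoe{B_r}$ for $r$ large, and force a strict drop in the dimension of the identity components of the remaining offenders, so the process stops after at most $d+1$ steps. The paper packages the same argument as a downward induction on the dimension $k$ (proving that all groups of dimension $\ge k$ in $\cGe{B_r}$ are eventually $\eps$-hollow), but the key ingredients --- the all-or-nothing behaviour of $G\cap(\bdx+L)$, the descent of $\eps$-hollowness to closed subgroups, and the dimension-decrease termination --- are identical.
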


\begin{proof}
Let $B_1 \supset B_2 \supset \dots$ be a countable base of
neighbourhoods of $\bdx$. We will prove the following, which has
Lemma~\ref{lem:Lfiniteness} as the case $k=0$:

\begin{quote}
\emph{For every $k\in\{0,\dots,d\}$ there is an $r$ such that every
  closed group of dimension $\ge k$ in $\cGe{B_r}$ is $\eps$-hollow at
  $\bdx$}.
\end{quote}

The proof of this is by induction on $d-k$.  The base case $k=d$ is
trivial since the only group of dimension $d$ is the whole space
$\RR^d$, and this group does not lie in $\cGe{B_1}$. (We assume that
$\Deltae{B_1}$ has non-empty interior: Lemma~\ref{lem:deltainteriors}
allows us to do this.)

Now, for a fixed $k$, our induction hypothesis is that there is an $r$
such that every closed group of dimension greater than $k$ in $\cGe{B_r}$ is
$\eps$-hollow at $\bdx$. That is, every closed group in $\cGe{B_{r}}$
that is \emph{not} $\eps$-hollow at $\bdx$ has dimension at most $k$.
By Theorem~\ref{thm:families}, $\cGe{B_{r}}$ contains finitely many
maximal groups, all closed. Let us denote $G_1,\dots G_t$ the ones of
dimension $k$ that are not $\eps$-hollow (if any), and let
$L_1,\dots,L_t$ be their corresponding identity components. Observe
that, although $\cGe{B_{r}}$ may contain additional non-$\eps$-hollow
groups of dimension $k$, apart from the $G_i$'s, any such group must
be contained in one of the $G_i$'s and, in particular, its identity
component must equal the corresponding $L_i$.

For each $i\in \{1,\dots,t\}$, since $G_i$ is non-$\eps$-hollow,
$\bdx+L_i$ meets $\Deltaoe{\bdx}$; by Lemma~\ref{lem:deltainteriors},
$\bdx+L_i$ meets $\Deltaoe{B_{r_i}}$ for some $r_i$. In particular,
$\cGe{B_{r_i}}$ contains neither $G_i$ nor any other group whose
identity component equals $L_i$. Obviously, the same holds for any
$r\ge r_i$.

Hence, taking $r' = \max \{r_1,\dots, r_t\}$ we have that
$\cGe{B_{r'}}$ does not contain any group with identity component
equal to any of the $L_i$'s. Since $B_{r'}\supset B_r$ we have
$\cGe{B_{r'}} \subset \cGe{B_{r}}$, and hence all the
non-$\eps$-hollow groups in $\cGe{B_{r'}}$ are non-$\eps$-hollow
groups in $\cGe{B_{r}}$ too, but necessarily of smaller dimension.
\end{proof}

\begin{lemma}\label{lemma:bound_for_G}
Let $\bdx \in \Delta_1$ and let $G$ be a closed group containing
$\ZZ^d$ and $\eps$-hollow at $\bdx$.  Then there is a neighbourhood
$W_G$ of $\bdx$ and a natural number $\ell_G$ such that every $\bdp
\in \Omega\cap G \cap W_G$ has $n_{\min}(\bdp) \le \ell_G$.
\end{lemma}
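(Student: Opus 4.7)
I would split the argument into cases. If $\bdx\notin G$, then, $G$ being closed, there is an open neighbourhood $W_G$ of $\bdx$ disjoint from $G$, so $\Omega\cap G\cap W_G=\nullset$ and any $\ell_G\in\NN$ works. Assume therefore that $\bdx\in G$. Then $\bdx+L\subseteq G$ and the $\eps$-hollowness of $G$ at $\bdx$ reads $(\bdx+L)\cap\Deltaoe{\bdx}=\nullset$, which, because $L$ is a \emph{linear} subspace, is equivalent to $(\bdx+L)\cap\Delta^\circ=\nullset$: a point $\bdx+l=(1-\eps)\bdx+\eps\bdy$ of the intersection with $\Deltaoe{\bdx}$ rewrites as $\bdy=\bdx+l/\eps\in(\bdx+L)\cap\Delta^\circ$, and conversely any $\bdy=\bdx+l'\in(\bdx+L)\cap\Delta^\circ$ yields $\bdx+\eps l'\in(\bdx+L)\cap\Deltaoe{\bdx}$.

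Next I use the topological structure of $G$: as a closed subgroup of the Lie group $\RR^d$, $G$ has $L$ as an open subgroup, so $G/L$ is discrete and $[\bdx]$ is isolated in $G/L$. Hence I can choose an open neighbourhood $W_G$ of $\bdx$ in $\RR^d$ such that $G\cap W_G\subseteq \bdx+L$. For every $\bdp\in\Omega\cap G\cap W_G$ this forces $\bdp+L=\bdx+L$ and so $(\bdp+L)\cap\Delta^\circ=\nullset$.

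Finally I split on whether $L\subseteq H_0$. If $L\subseteq H_0$, then $\bdx+L\subseteq \bdx+H_0=H_1$, hence $G\cap W_G\subseteq H_1$; since $\Omega\cap H_1=\nullset$, the set $\Omega\cap G\cap W_G$ is empty and any $\ell_G$ suffices. Otherwise $L\not\subseteq H_0$, so $\bdp+L$ meets $H_1$ transversely, and if $\bdp+L$ contained a point of $\Delta_1^\circ$ then moving along any $l\in L$ with $\Ssum l_i<0$ would immediately produce a point of $\Delta^\circ$, contradicting $(\bdp+L)\cap\Delta^\circ=\nullset$. Hence $(\bdp+L)\cap\Delta_1^\circ=\nullset$, and Proposition~\ref{prop:facetbound} yields $n_{\min}(\bdp)\le\ell_L$; taking $\ell_G=\lceil\ell_L\rceil$ finishes the proof. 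The main obstacle is passing from the pointwise hollowness of $G$ at $\bdx$ to a uniform statement for all nearby $\bdp\in G$, and this is exactly what the discreteness of $G/L$ achieves by confining such $\bdp$ to the single coset $\bdx+L$.
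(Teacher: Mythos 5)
Your proof is correct and follows essentially the same route as the paper: isolate the coset $\bdx+L$ inside $G$ using that $L$ is open in the closed group $G$ (so $G/L$ is discrete), translate $\eps$-hollowness at $\bdx$ into $(\bdx+L)\cap\Delta^\circ=\nullset$ and hence $(\bdp+L)\cap\Delta_1^\circ=\nullset$, and invoke Proposition~\ref{prop:facetbound} when $L\not\subseteq H_0$. The only (harmless) divergence is the sub-case $L\subseteq H_0$, where the paper bounds $\Ssum p_i$ away from $1$ via discreteness of $\pi_{H_0}(G)\subset\RR$, while you note that $G\cap W_G\subseteq\bdx+L\subseteq H_1$ is already disjoint from $\Omega$ --- slightly cleaner, and valid since your first case already disposes of $\bdx\notin G$.
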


\begin{proof}
Let $L$ be the identity component of $G$. There are three
possibilities:
\begin{itemize}
\item If $\bdx \not\in G$, simply take $W_G=\RR^d \setminus G$ and
  $\ell_G=0$.

\item If $L\subset H_0$, then $\pi_{H_0}(G)=G/(G\cap H_0) \subset \RR$ is
  discrete. Let $s$ be its minimum in $(1,\infty)$. We can take $W_G=
  \{\bdp \mid \Ssum p_i < s\}$ and $\ell_G=0$, since $\Omega\cap G
  \cap W_G = \nullset$.
  
\item If $\bdx\in G$ and $L\not\subset H_0$, then $\bdx + L \subset G$
  but $(\bdx+L)\cap\Deltaoe{\bdx} = \nullset$, because $G$ is
  $\eps$-hollow. But then $L+\bdx$ does not meet $\Delta_1^\circ$, so
  we may apply Proposition~\ref{prop:facetbound} to $L$. We then
  get an $\ell_G$ such that for every $\bdp \in \Omega\cap( \bdx+L)$
  we have that the minimum weight of $\bdp$ is bounded by $\ell_L$. We
  can then take $W_G = \RR^d\setminus (G \setminus (\bdx +L))$, so
  that $G \cap W_G = \bdx + L$ and $\Omega\cap G \cap W_G =
  \Omega\cap( \bdx+L)$.  \qedhere
\end{itemize}
\end{proof}

We can now prove Proposition~\ref{prop:local_boundary}.

\begin{proof}[Proof of Proposition~\ref{prop:local_boundary}]
By Lemma~\ref{lem:Lfiniteness}, $\bdx$ has an open neighbourhood
$U_\bdx$ such that every group in $\cGe{U_\bdx}$ that contains $\bdx$
is $\eps$-hollow.  By Theorem~\ref{thm:families}, $\cGe{U_\bdx}$ has a
finite number of maximal elements, all closed and $\eps$-hollow at
$\bdx$, which we denote $G_1,\dots,G_t$. By
Lemma~\ref{lemma:bound_for_G}, each $G_i$ gives a neighbourhood $W_i$
of $\bdx$ and a natural number $\ell_i$ such that every $\bdp \in
\Omega\cap G_i \cap W_i$ has $n_{\min}(\bdp) \le \ell_i$.

Now it is enough to take $W_\bdx= U_\bdx \cap (\bigcap_i W_i)$ and
$\ell_\bdx = \max \ell_i$.  Indeed, let $\bdp\in W_\bdx\cap\outside$
be $\eps$-hollow, so that $\Deltae{\bdp}\cap \Lambda_\bdp =
\nullset$. Since $\bdp\in W_\bdx$, we have $\Deltae{\bdp}\supset
\Deltae{W_\bdx}\supset \Deltae{U_\bdx}$. In particular, the group
$\Lambda_\bdp$ is in $\cGe{U_\bdx}$, and hence is contained in one of
the $G_i$'s.  Thus $\bdp \in \Omega\cap G_i \cap W_i$.
\end{proof}

\subsection{The general case of Birkar's conjecture}\label{subsect:generalbirkar}

We are now in a position to give the proof of
Theorem~\ref{thm:Birkar}, settling Conjecture~\ref{qu:advanced}
completely.

\begin{proof}[Proof of Theorem~\ref{thm:Birkar}] Fix
$\eps\in(0,1]$. For each $\bdx\in \Delta_1$, choose $\ell_\bdx$ and
$W_\bdx$ as in Proposition~\ref{prop:local_boundary}, with
$\ell_\bdx$ as small as possible. For a non-negative integer
$\ell$, define $\Delta_1(\ell) := \{ \bdx \in \Delta_1 \mid
\ell_\bdx \le \ell\}$. Then $\Delta_1(\ell)$ is relatively open in
$\Delta_1$, because if $\bdy\in W_\bdx\cap \Delta_1$ then
$\ell_\bdy<\ell_\bdx$.  Moreover, the $(\Delta_1(\ell))_{\ell\in \NN}$ obviously
form an increasing sequence and they cover $\Delta_1$. 
Observe, for example, that
$\Delta_1^\circ\subseteq \Delta_1(0)$, because if
$\bdx\in\Delta_1^\circ$ and $G\cap(\bdx+L)$ meets $\Delta_1^\circ$
then $L\subset H_0$. Put differently, Proposition~\ref{prop:facetbound} is not
needed on $\Delta_1^\circ$.

By compactness, there is an open subset
$W=\bigcup_{\bdx\in\Delta_1^\circ}W_\bdx$ and an integer $\ell_W$ such
that $\Delta_1\subset W$ and every $\eps$-hollow $\bdp\in \outside\cap
W$ has $n_{\min}(\bdp)\le\ell_W$.
On the other hand, if $\bdp\in 2\Omega$ then $V<1$, and since
$\outside \setminus(2\outside\cup W)$ is compact, the index
(hence the minimum weight) of all $\bdp \in \outside \setminus U$ has
a global upper bound.
\end{proof}

\section{Terminal and canonical bounds}\label{sect:lowdim}

Throughout this section we take $\eps=1$, so that we are considering
only canonical and terminal singularities. In these cases 
we compute more explicit bounds, assuming that $\dim L$ or $\codim L$
is small. Combining these bounds with the classification of empty
$4$-simplices in~\cite{IS2} we give precise bounds in the terminal
$4$-fold case: that is, a precise answer to Question~\ref{qu:simple}. 

\subsection{Bounds in terms of width}\label{subsect:facetwidths}

We first rework the bound of Proposition~\ref{prop:facetbound} in
terms of the lattice width of $\Conv(S)=\pi_L(\Delta)$.

\begin{definition}\label{def:facetwidth}
A linear functional $f\colon\RR^d\to\RR$ is called \emph{primitive}
with respect to a lattice $\Lambda$ if $f(\Lambda)=\ZZ$.

The \emph{width} of a lattice polytope $\Pi$ in the direction of $f$ 
is the length of the interval $f(\Pi)$. Its \emph{facet width} 
with respect to a facet $F$ is the width in the direction of the
unique (up to a sign) primitive linear functional that is constant on $F$. 
\end{definition}

Let $G \subseteq \RR^d$ be a closed group containing $\ZZ^d$ and not
meeting $\Delta^\circ$, with identity component $L$. We keep the
notation from Subsection~\ref{subsect:canonicalbirkar}, and we let
$\Lambda_G=\pi_L(G)$, which is a lattice in $\RR^d/L$, and put
\[
\ell_G=\max\{n_{\min}(\bdp)\mid \bdp\in \outside \cap G\},
\]
i.e.\ the best possible bound for the smallest weight in $G$.

\begin{proposition}\label{prop:facetwidth}
$\ell_G$ is bounded by the maximum facet width of $\pi_L(\Delta)$ with
  respect to $\Lambda_G$.
\end{proposition}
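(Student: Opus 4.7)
The plan is to deduce this directly from Proposition~\ref{prop:facetbound} by recasting its Euclidean ratio $\ell_H$ as a lattice quantity. Proposition~\ref{prop:facetbound} already gives $\ell_G \le \max_H \ell_H$, where the maximum is over the facet-supporting hyperplanes of $\pi_L(\Delta)$. It is therefore enough to prove, for each facet $F$ of $\pi_L(\Delta)$ supported by $H$, the inequality $\ell_H \le w_F$, where $w_F$ is the facet width of $F$ with respect to $\Lambda_G$.

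For each such $F$, I would fix the primitive linear functional $f\colon \RR^d/L \to \RR$ (with respect to $\Lambda_G$) that is constant on $F$, orienting $f$ so that $\pi_L(\Delta) \subseteq \{f \le c\}$ with $c := f|_F$. Since $\ZZ^d \subseteq G$ we have $\bds_i = \pi_L(\bde_i) \in \pi_L(\ZZ^d) \subseteq \Lambda_G$, so every $f(\bds_i)$ is an integer, and $c$ is an integer because $F$ contains vertices of $\pi_L(\Delta)$. Using the Remark after Proposition~\ref{prop:facetbound}, that only ratios of distances matter, so $f$ itself can be used to measure them, the quantity $\ell_H$ becomes
\[
\ell_H \;=\; \frac{c}{\max_{\bds_i \notin H}\bigl(c - f(\bds_i)\bigr)}
\]
when $c > 0$, with $\ell_H = 0$ when $c = 0$; while $w_F = c - \min_{\bds \in S} f(\bds)$ by definition of facet width.

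The inequality $\ell_H \le w_F$ then reduces to a short case split. If $c = 0$ it is automatic. If $c > 0$ and $\min_{\bds \in S} f(\bds) = f(0) = 0$, then $w_F = c$, and the integrality bound $c - f(\bds_i) \ge 1$ gives $\ell_H \le c = w_F$. Otherwise the minimum is attained at some $\bds_i$ with $f(\bds_i) < 0$; that same $\bds_i$ realises the maximum in the denominator of $\ell_H$, so $\ell_H = c/w_F$, and since $w_F > c$ this is strictly less than $1 \le w_F$. The only real subtlety, and the main place where the lattice condition enters, is the simultaneous role of $f$: it must both witness a separating hyperplane in the sense of Proposition~\ref{prop:facetbound} and realise the facet width on the $\Lambda_G$ side. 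Once that identification is in place, the whole estimate closes on the observation that the denominator of $\ell_H$ is a positive integer and hence at least $1$.
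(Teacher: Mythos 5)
There is a genuine gap: your argument only covers the case $L \not\subseteq H_0$, but the proposition also applies when $L \subseteq H_0$ --- in particular to every discrete $G$ (so $L=\{0\}$), which is exactly the case of the lattices $\Lambda_\bdp$ that the proposition is ultimately applied to. When $L \subseteq H_0$, the image $\pi_L(\Delta_1)$ is a facet of $\pi_L(\Delta)$ whose supporting hyperplane $H = \pi_L(H_1)$ contains \emph{all} of $\bds_1,\dots,\bds_d$; the minimum defining $\ell_H$, and likewise the maximum in your denominator, is then taken over the empty set, so Proposition~\ref{prop:facetbound} simply does not apply. (Its stated hypothesis ``$L$ not contained in $H_1$'' is vacuous as written, since $0\notin H_1$; what its proof actually uses is $L \not\subseteq H_0$, which is what guarantees some $\bds_i \notin H$ for every facet.) Your opening claim that Proposition~\ref{prop:facetbound} ``already gives $\ell_G \le \max_H \ell_H$'' therefore silently excludes this case, and nothing in your case split recovers it: a point $\bdp \in \outside \cap G$ may be separated from $\pi_L(\Delta)$ only by $\pi_L(H_1)$, and then no coordinate inequality of the form $n_i \le 1/(1-a_i)$ is available.

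The paper closes this case by a different mechanism: taking $f$ to be the primitive functional constant on $\pi_L(H_1)$, with $f_1 := f(\pi_L(H_1))$ equal to the facet width, the point $\pi_L(\bdp) \in \Lambda_G$ is strictly separated from $\pi_L(\Delta)$, so integrality forces $f(\pi_L(\bdp)) \ge f_1 + 1$, i.e.\ $\Ssum p_i \ge 1 + 1/f_1$, which bounds the index $V$ (and hence $n_{\min}(\bdp)$) by $f_1$. You need this, or an equivalent step exploiting the discreteness of $\Lambda_G$ transverse to $\pi_L(H_1)$. For the complementary case $L \not\subseteq H_0$ your computation is correct and is essentially the paper's: normalising distances by the primitive functional makes each $\dist(H,\bds_i)$ a positive integer and $\dist(H,0)=c$ at most the facet width, so $\ell_H \le c \le w_F$; your three-way split is a harmless refinement of that.
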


\begin{proof}
Suppose first that $L\not\subset H_0$ and let $H$ be a
facet-supporting hyperplane of $\pi_L(\Delta)=\Conv(S)$.  We normalise
the distance to $H$ by taking $f$ to be the primitive linear
functional constant on $H$ and $\dist(H,\bdx)=|f(\bdx)-f(H)|$. Then
$1\le \dist(H,\bds_i)\in\NN$ for every $\bds_i\not\in H$ and
$\dist(H,0)$ is bounded above by the facet width with respect to the
facet contained in $H$. Hence the statement follows from
Proposition~\ref{prop:facetbound}.

If $L\subset H_0$ then $\pi_L(H_1)$ is a facet-supporting hyperplane
of $\pi_L(\Delta)$. If $\bdp \in \outside \cap G$ then
$\pi_L(\bdp)\in\Lambda_G$ and is strictly separated from
$\pi_L(\Delta)$ by $\pi_L(H_1)$. So if $f$ is the primitive linear
functional constant on $\pi_L(H_1)$, then $f_1:=f(\pi_L(H_1))$ is the
facet width of $\pi_L(\Delta)$ with respect to $\pi_L(H_1)$, and
$f(\bdp)\ge f_1+1$. Hence $\Ssum p_i\ge \frac{f_1+1}{f_1}$, so $V\le
f_1$ and therefore $n_{\min}(\bdp)\le f_1$.
\end{proof}

\begin{corollary}\label{coro:cases}
With the notation of Proposition~\ref{prop:facetwidth},
\begin{enumerate}
\item[(a)] If $\pi_L(\Delta)$ has width equal to $1$ in  \emph{some} 
  lattice direction then $\ell_G\in\{0,1\}$. 
  This is always the case if $\dim L=d-1$. 
\item[(b)] If $\dim L=d-2$, then $\ell_G\in \{0,1,2\}$.
\end{enumerate}
\end{corollary}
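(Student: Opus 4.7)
The plan is to combine some structural information about the polytope $\pi_L(\Delta)$ in low codimension with the supporting-hyperplane technique of Propositions~\ref{prop:facetbound} and~\ref{prop:facetwidth}.

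For the first claim of part (a), the first step is normalisation: because $f$ is primitive for $\Lambda_G$ and the vertices of $\pi_L(\Delta)$ lie in $\Lambda_G$, the values $f_{\min}$ and $f_{\max}$ are integers, and $0\in\pi_L(\Delta)$ forces $f_{\min}\le 0\le f_{\max}$ with $f_{\max}-f_{\min}=1$. After flipping the sign of $f$ if necessary I may assume $f_{\min}=0$ and $f_{\max}=1$, so every $\bds_i$ satisfies $f(\bds_i)\in\{0,1\}$. The argument then splits into two cases. If every $\bds_i$ has $f(\bds_i)=1$, then $f\circ\pi_L=\Ssum x_i$, so $L\subseteq H_0$ and the $L\subseteq H_0$ branch in the proof of Proposition~\ref{prop:facetwidth} gives $n_{\min}(\bdp)\le f_1=1$ at once. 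Otherwise at least one $\bds_i$ has $f(\bds_i)=0$, and for each $\bdp\in\outside\cap G$ I split by the integer value $f(\pi_L(\bdp))$: if it is $\ge 1$, the hyperplane $H=\{f=1\}$ weakly separates $\pi_L(\bdp)$ from $\pi_L(\Delta)$ and the calculation in Proposition~\ref{prop:facetbound} yields $n_i\le \dist(H,0)/\dist(H,\bds_i)=1$ for any $i$ with $f(\bds_i)=0$; if it is $\le 0$, the separating hyperplane $\{f=0\}$ contains the origin, and the $0\in\Tilde H$ observation in the proof of Proposition~\ref{prop:facetbound} forces some $p_i=0$, hence $n_{\min}(\bdp)=0$. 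Intermediate integer values $f(\pi_L(\bdp))\in(0,1)$ do not arise.

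For the ``always'' claim of part (a), the goal is to show that $\dim L=d-1$ forces $\pi_L(\Delta)$ to have width exactly $1$ in its essentially unique lattice direction. Here $\pi_L(\Delta)$ is a segment in $\RR\cong\RR^d/L$ containing $0$, and the standard projection argument used throughout Section~\ref{subsect:Lawrence} gives $\Lambda_G\cap\pi_L(\Delta^\circ)=\nullset$. Because $0$ lies in both $\pi_L(\Delta)$ and $\Lambda_G$, it must be an endpoint of the segment; the next element of the rank-one discrete group $\Lambda_G$ must then coincide with the other endpoint, and combined with the fact that an integer-valued positive width is $\ge 1$, this forces the width to be exactly $1$.

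For part (b), the polygon $\pi_L(\Delta)\subseteq \RR^d/L\cong\RR^2$ is two-dimensional (because $\pi_L$ is an open map and $\Delta^\circ$ is open in $\RR^d$) and hollow with respect to $\Lambda_G$ by the same projection argument as above. The key input is then the classical classification of hollow lattice polygons in the plane: up to unimodular equivalence, such a polygon either is contained in a strip of lattice width $1$ or is equal to $2\Delta_1=\Conv\{0,2\bde_1,2\bde_2\}$. In the first case $\pi_L(\Delta)$ has lattice width $1$ in some direction, so part~(a) gives $\ell_G\le 1$; in the second case each of the three facets of $2\Delta_1$ has facet width $2$, and Proposition~\ref{prop:facetwidth} yields $\ell_G\le 2$ at once. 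The only delicate point is to invoke the planar classification correctly; everything else follows from Propositions~\ref{prop:facetbound} and~\ref{prop:facetwidth}.
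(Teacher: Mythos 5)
Your proof is correct and takes essentially the same route as the paper: part (b) is exactly the paper's argument (a hollow lattice polygon either has width $1$ or is unimodularly equivalent to $\Conv((0,0),(2,0),(0,2))$, whose three facet widths all equal $2$), and part (a) rests on the same separating-hyperplane computation from Propositions~\ref{prop:facetbound} and~\ref{prop:facetwidth}. The only difference is organisational: you run that computation directly with the hyperplanes $\{f=0\}$ and $\{f=1\}$, whereas the paper first enlarges $G$ to $G+\Ker(\Tilde f)$ so as to reduce to the case $\dim L=d-1$, where $\pi_L(\Delta)$ is a hollow lattice segment, hence a unit segment of facet width $1$.
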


\begin{proof}
\begin{enumerate}
\item[(a)] Let $f$ be a primitive functional giving width~$1$ to
  $\Delta/L$, and $\Tilde f$ its pull-back to $\RR^d$. Then $G':=
  G+\Ker(\Tilde f)$ is a closed group containing $G$ and not
  intersecting $\Delta^\circ$, which implies $\ell_G \le \ell_{G'}$.
  
  Thus there is no loss of generality in assuming $\dim L=d-1$.  In
  this case $L=\Ker(\Tilde f)$, so $\pi_{L}(\Delta)=f(\Delta)$ is a
  hollow lattice polytope of dimension $1$, that is, a unit
  segment. This has facet width $1$ with respect to every facet, so
  Proposition~\ref{prop:facetwidth} gives the statement.
  
\item[(b)] Here $\pi_L(\Delta)$ is a hollow lattice polytope of
  dimension~$2$. This implies $\pi_L(\Delta)$ either has width $1$ or
  equals (modulo an affine isomorphism of the lattice) the triangle
  $\Conv((0,0),(2,0),(0,2))$ (see, e.g., \cite{Hurkens}). This
  triangle has width $2$ with respect its to all its three facets.
  \qedhere
\end{enumerate}
\end{proof}

We can now recover Kawakita's result on the terminal weighted blowups
in dimension~$3$.

\begin{corollary}[\protect{\cite[Theorem~3.5]{Kawakita}}]
\label{coro:kawakita}
The weighted blowup $\AA^3_\bdn$ has terminal singularities if and
only if the weights are $(1,a,b)$, with $a$ and $b$ coprime.
\end{corollary}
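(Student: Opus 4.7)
The plan is to use Corollary~\ref{coro:onecone} to rephrase terminality as emptiness of a lattice simplex, and then to combine the classical theorem of White---that every empty lattice $3$-simplex has lattice width~$1$---with a short arithmetic calculation. Both directions split cleanly using this viewpoint.

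For the direction ``$\Leftarrow$'' I would take $\bdn=(1,a,b)$ with $\gcd(a,b)=1$ and verify directly that $\Delta_\bdn=\Conv(\bde_1,\bde_2,\bde_3,(1,a,b))$ contains no lattice point outside its four vertices. The edges of the form $[\bde_i,\bdn]$ have direction vectors $(0,a,b)$, $(1,a-1,b)$ and $(1,a,b-1)$, each of gcd~$1$ (using $\gcd(a,b)=1$); the other three edges are unit segments. A barycentric parameterization of each facet $\Conv(\bde_j,\bde_k,\bdn)$ shows that any integer point there forces a barycentric coefficient to vanish, placing the point on an edge; a similar calculation rules out integer points in the interior of $\Delta_\bdn$. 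Proposition~\ref{prop:onecone}(a) with $\eps=1$ then gives terminality.

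For ``$\Rightarrow$'', suppose $\AA^3_\bdn$ has terminal singularities. By Proposition~\ref{prop:onecone}(a) and the change of coordinates in Corollary~\ref{coro:onecone}(a), the standard simplex $\Delta$ is empty with respect to the lattice $\Lambdap=\ZZ^3+\ZZ\bdp$, where $\bdp=\bdn/V$ and $V=\Vn$. White's theorem on empty lattice tetrahedra supplies a primitive linear functional $f$ on $\Lambdap$ with $f(\Delta)$ contained in an interval of length~$1$. Since $f(0)=0$ and the values $f(\bde_i)$ are integers, after replacing $f$ by $-f$ if necessary we may assume $f(\Delta)\subseteq[0,1]$, so $f(\bde_i)\in\{0,1\}$ and hence $f(\bdx)=\sum_{i\in S}x_i$ for some non-empty $S\subseteq\{1,2,3\}$. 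Primitivity of $f$ on $\Lambdap$ then forces $f(\bdp)=\frac{1}{V}\sum_{i\in S}n_i\in\ZZ$, i.e.\ $V\mid\sum_{i\in S}n_i$. Combined with $\Ssum n_i=V+1$ and $n_i\ge 1$, this forces $\sum_{i\in S}n_i=V$, so $|S^c|=1$ and $n_j=1$ for the unique $j\in S^c$.

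Hence $n_{\min}=1$, and after permuting coordinates $\bdn=(1,a,b)$. Applying the emptiness of $\Delta_\bdn$ to the edge from $\bde_1$ to $\bdn$, whose direction is $(0,a,b)$, forces $\gcd(a,b)=1$. The main obstacle is the invocation of White's theorem; once the width-$1$ functional is in hand, the arithmetic is immediate. An alternative route that stays entirely within the framework of this paper is to set $G:=\Lambdap+\Ker(f)$ and observe that $G$ is a closed subgroup containing $\ZZ^3$ whose identity component has dimension $\ge 2$ and which misses $\Delta^\circ$ (because $f(G)=\ZZ$ cannot meet $f(\Delta^\circ)=(0,1)$); Corollary~\ref{coro:cases}(a) then yields $n_{\min}\le 1$ directly. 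In either formulation the essential non-formal input is the existence of the width-$1$ functional.
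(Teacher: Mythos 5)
Your proof is correct and rests on the same essential input as the paper's: White's theorem that every empty lattice $3$-simplex has width~$1$. The paper's own proof is just the one-line deduction from Corollary~\ref{coro:cases}(a) together with White's theorem (your ``alternative route''); your explicit arithmetic with the primitive functional $f(\bdp)=\frac{1}{V}\sum_{i\in S}n_i\in\ZZ$, and your verification of the converse direction and of the coprimality condition, supply details that the paper leaves implicit.
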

\begin{proof}
This follows immediately from Corollary~\ref{coro:cases}(a) and the
theorem of White~\cite{White} that all empty $3$-simplices have
width~$1$.
\end{proof}

\subsection{Groups of dimension~$1$}\label{subsect:dim1groups}

For our application to $d=4$ in Subsection~\ref{subsect:dim4} below,
we want to consider the case $\dim L=1$ more carefully. In this case
let $(a_1,\dots, a_d)\in\ZZ^d$ be a primitive integer vector in $L$,
which is unique up to sign, and let $a_0:=\sum_{i=1}^d a_i$.  The
vector $\bda:=(a_0,\dots,a_d)\in \ZZ^{d+1}$ is called the
\emph{$(d+1)$-tuple} of $L$.  We assume $L\not\subseteq H_0$, which is
equivalent to $a_0\ne 0$.

\begin{lemma}\label{lem:dim1general}
Suppose $\bdp\in \outside$ and that $\dim L=1$, and that
$(\bdp+L)\cap\Delta^\circ=\nullset$. Then $n_{\min}(\bdp)\le
\max_{i=1,\dots,d}\{-a_i/a_0\}$.
\end{lemma}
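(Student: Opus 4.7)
The plan is to apply Proposition~\ref{prop:facetbound} to $L$ and compute the resulting quantity $\ell_L$ explicitly from the primitive generator $(a_1,\dots,a_d)$. Under our running assumptions $\bdp\in\outside$ and $L\not\subset H_0$, the hypothesis $(\bdp+L)\cap\Delta^\circ=\nullset$ is equivalent to the hypothesis $(\bdp+L)\cap\Delta_1^\circ=\nullset$ of Proposition~\ref{prop:facetbound}: the line $\bdp+L$ crosses $H_1$ at a unique point $\bdr$, and by continuity $\bdr\in\Delta_1^\circ$ holds iff small perturbations of $\bdr$ along the line into the $\Ssum<1$ half-space lie in $\Delta^\circ$ (moreover, if any $\bdq\in\Delta^\circ$ lay on the line, then the crossing $\bdr$ would be a convex combination $\lambda\bdp+(1-\lambda)\bdq$ with $\lambda\in(0,1)$, giving $r_i\ge(1-\lambda)q_i>0$ for each $i$). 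Thus Proposition~\ref{prop:facetbound} gives $n_{\min}(\bdp)\le\ell_L$, reducing the lemma to a bound on $\ell_L$.

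To compute $\ell_L$, I would project along $L$: set $\bds_0=0$ and $\bds_i=\pi_L(\bde_i)$ for $i\ge 1$, so that $\pi_L(\Delta)=\Conv\{\bds_0,\dots,\bds_d\}$ lies in $\RR^{d-1}$. Since $(a_1,\dots,a_d)\in L$ we have $\sum_{i=1}^d a_i\bds_i=0$, which together with $\sum_i a_i=a_0$ yields the unique (up to scalar) affine dependence
\[
-a_0\bds_0+a_1\bds_1+\cdots+a_d\bds_d=0
\]
on $\{\bds_0,\dots,\bds_d\}$, with coefficients summing to zero. The hypothesis $a_0\ne 0$ ensures this dependence is nontrivial on $\bds_0$. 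Facets of $\pi_L(\Delta)$ containing $\bds_0$ give $\dist(H,0)=0$ and hence $\ell_H=0$; the nontrivial facets have $\bds_0$ and exactly one $\bds_j$ (with $j\ge 1$) as their two off-vertices, which by Radon's theorem applied to the dependence is admissible precisely when $-a_0$ and $a_j$ have opposite signs, forcing $\bds_0$ and $\bds_j$ to lie on the same side of $H$.

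For such a facet, evaluating the affine dependence on a defining linear functional $f$ of $H$ yields $-a_0(f(\bds_0)-c)+a_j(f(\bds_j)-c)=0$ where $c=f(H)$, whence $\dist(H,0)/\dist(H,\bds_j)=|a_j/a_0|$ and so $\ell_H=|a_j/a_0|$. Taking the maximum over admissible $j$'s produces the desired bound on $n_{\min}(\bdp)$. The main obstacle will be the sign bookkeeping in this last step: specifically, verifying that the admissible indices are precisely those contributing to the max in the statement, and aligning the absolute value $|a_j/a_0|$ extracted from the affine dependence with the signed form $-a_i/a_0$ appearing in the stated bound, via the correct orientation convention on the primitive generator $(a_1,\dots,a_d)$ of $L$.
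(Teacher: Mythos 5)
Your proposal is correct and follows essentially the same route as the paper's proof: reduce to Proposition~\ref{prop:facetbound}, identify the unique affine dependence on $S=\{0,\bds_1,\dots,\bds_d\}$ coming from the primitive generator of $L$, and evaluate it on the functional defining each facet-supporting hyperplane $H$, so that facets through $0$ contribute $\ell_H=0$ and the remaining facets (which miss exactly $0$ and one $\bds_i$) give the ratio $|a_i/a_0|$. The ``sign bookkeeping'' you leave as an obstacle is not a gap in your argument but an inconsistency in the paper's conventions: for the stated bound $\max_i\{-a_i/a_0\}$ (and for compatibility with the quintuples $\bdq$, whose entries sum to zero) one must read $a_0=-\sum_{i=1}^d a_i$, and with that convention the positive ratio you extract for an admissible index $j$ is $a_j/\bigl(\sum_i a_i\bigr)=-a_j/a_0$, exactly as claimed; no choice of orientation of the generator can affect this, since negating it negates $a_0$ as well.
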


\begin{proof}
The set $S=\{0,\bds_1,\dots,\bds_d\}$ affinely spans $\RR^d/L\imic
\RR^{d-1}$ and has $d+1$ points, so it has a unique (modulo a scalar
factor) affine dependence. Since $\sum_{i=1}^d a_i \bde_i\in L$, the
coefficient vector of that dependence is precisely $\bda$.

To bound the minimum weight we use
Proposition~\ref{prop:facetbound}. Let $H$ be a facet-supporting
hyperplane of $\Conv(S)$.  If $0\in H$ then $\ell_H=0$ in
Proposition~\ref{prop:facetbound}. If $0\not\in H$ then, since
$L\not\subset H_0$, there must be an $i$ with $\bds_i\not \in H$. Thus
$H$ contains all of $S$ except for $0$ and a single $\bds_i$. Applying
the affine dependence $\bda$ to the affine functional vanishing on $H$
gives $\dist(H,0) \, a_0 + \dist(H,\bds_i)\, a_i =0 $,
which finishes the proof since
\[
\min_{\bds_j\not\in H} \frac{\dist(H,0) }{ \dist(H,\bds_j)} = 
\frac{\dist(H,0) }{ \dist(H,\bds_i)} = 
- \frac{a_i }{ a_0} .
\qedhere
\]
\end{proof}

We also have the following alternative bound, which is better than the
previous one in a few critical cases.

\begin{lemma}\label{lem:dim1special}
Let $\bdp\in\outside$ be such that $\bdn=V\bdp \in \NN^d$, where
$V=\frac{1}{-1+\Ssum p_i}$ as usual. Suppose that there is a proper
subset $J\subset\{1,\dots,d\}$ such that
\[
\sum_{i\in J} p_i - s \sum_{i=1}^d p_i \in \ZZ
\]
for a positive integer $s$. Then either
$\sum_{i\in J}n_j\le s$ or else $n_i=0$ for all $i\not\in J$.
\end{lemma}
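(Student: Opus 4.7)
The plan is to translate the hypothesis from a condition on $\bdp$ into a congruence on the integer weights $\bdn$, and then read off the dichotomy from elementary bounds on partial sums of the $n_i$.

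First I would observe that the definition $V = 1/(-1+\Ssum p_i)$ together with $\bdn = V\bdp$ gives $\Ssum n_i = V+1$, so in particular $V$ is a positive integer, positive because $\bdp\in\outside$ forces $\Ssum p_i > 1$. Writing $N_J := \sum_{i\in J}n_i$ and multiplying the hypothesis $\sum_{i\in J}p_i - s\Ssum p_i \in \ZZ$ by $V$, we obtain $N_J - s\Ssum n_i \in V\ZZ$, and substituting $\Ssum n_i = V+1$ yields
\[
N_J \equiv s \pmod{V}.
\]
Hence $N_J = s + Vk$ for some $k \in \ZZ$.

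The remainder is a two-case arithmetic argument using the obvious bounds $0 \le N_J \le \Ssum n_i = V+1$. If $k \le 0$ then $N_J \le s$, which is the first alternative. If $k \ge 1$ then $N_J \ge s + V \ge V+1$, which combined with $N_J \le V+1$ forces $N_J = V+1 = \Ssum n_i$, so that $n_i = 0$ for every $i\notin J$, the second alternative.

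I do not foresee any real obstacle: the argument is essentially one congruence computation. The only slightly delicate point is noting that $\bdp \in \outside$ is precisely what ensures $V \ge 1$, which is what makes the inequality $s + V \ge V + 1$ collapse to equality and thereby kill all weights outside $J$ in the $k \ge 1$ case.
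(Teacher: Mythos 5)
Your proof is correct and follows essentially the same route as the paper: multiply the hypothesis by $V$, use $\Ssum n_i = V+1$ to get $\sum_{i\in J}n_i \equiv s \pmod V$, and then conclude by an elementary case analysis on the size of $\sum_{i\in J}n_i$ relative to $V$. The only cosmetic difference is that the paper phrases the dichotomy as ``either all $n_i=0$ for $i\notin J$, or $\sum_{i\in J}n_i\le V$'' before invoking the congruence, whereas you split on the sign of the integer $k$ with $\sum_{i\in J}n_i = s+Vk$; the two are interchangeable.
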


\begin{proof}
Multiplying the equation in the statement by $V$ we obtain that
\[
\sum_{i\in J}n_i - s (V+1) \in V\ZZ,
\]
so $\sum_{i\in J}n_i \equiv s \pmod V$. Since $\Ssum n_i = V+1$,
either $n_i = 0$ for every $i\not\in J$, or $\sum_{i\in J} n_i \le V$. The
latter, together with $\sum_{i\in J}n_i \equiv s \pmod V$, implies
$\sum_{i\in J} n_i \le s$. 
\end{proof}

\subsection{Terminal $4$-fold case}\label{subsect:dim4}

Now we consider the case $d=4$, where there is an extensive
history. Notice that another interpretation of
Corollary~\ref{coro:onecone} is that $\AA^d_\bdn$ has terminal (or
canonical) singularities if and only if the cyclic quotient
singularity $\frac{1}{V}\bdn$ is terminal (or canonical), where
$V=\Vn$.

In fact any non-Gorenstein terminal quotient singularity in
dimension~$4$ is cyclic, but this fails in higher dimension:
see~\cite{Barile} for both of these facts. The singularity
$\frac{1}{V}\bdn$ is never Gorenstein, but we note for
completeness that Gorenstein cyclic terminal $4$-fold singularities
were classified in~\cite{MS}, and Gorenstein non-cyclic terminal
$4$-fold singularities in~\cite{Ann}.

In dimension~$4$, a classification of non-Gorenstein terminal quotient
singularities was begun experimentally in~\cite{MMM}. The first
definite result was proved in~\cite{Sa} (another proof of the same
result may be found in~\cite{Bob}): together with the results
of~\cite{Bor} and~\cite{Barile}, it implies that the list
in~\cite{MMM} of such singularities of prime index is complete with
possibly finitely many exceptions. Note, however, that the claim made
in~\cite{Barile} that the results of~\cite{Sa} and~\cite{Bob} are
valid for composite index is incorrect, as was pointed out
in~\cite{BHHS}.

The complete classification of non-Gorenstein terminal quotient
singularities in dimension~$4$ was recently given in~\cite{IS2}, and
we use it to prove Theorem~\ref{thm:terminal4}.

In~\cite[Section 2]{IS2} hollow simplices are divided into \emph{fine
  families}. Two hollow lattice simplices $\Delta_1$ and $\Delta_2$ in
$\RR^d$, with $\Vx(\Delta_i)=\{\bdv_{ij}\}\subset\ZZ^d$, lie in the
same fine family if there is an integer $k\le d$ and integer affine
maps $\pi_i\colon\ZZ^d\to \ZZ^k$ such that
$\pi_1(\Vx(\Delta_1))=\pi_2(\Vx(\Delta_2))=S$ and $\Conv(S)$ is
hollow. Here $S=\{\bds_0, \dots, \bds_d\}$ is to be thought of as a
multiset: that is, there is a permutation $\sigma$ of $\{0,\ldots,d\}$
such that $\pi_1(\bdv_{1\sigma(j)})=\pi_2(\bdv_{2j})$ for all~$j$.

As before, if $G$ is a closed group containing $\ZZ^d$ and with $G\cap
\Delta^\circ = \nullset$ then
$\pi_L(\Delta)$ is a hollow lattice polytope with respect to the
lattice $\Lambda_G=\pi_L(G)$. Thus the rational points in $G$ parametrise (perhaps part
of) a fine family of hollow simplices: each
point $\bdp \in G\cap \QQ^d$ corresponds, as in
Corollary~\ref{coro:onecone}, to the standard simplex
$\Delta\subset\RR^d$ considered with respect to
$\Lambdap$. In this situation we say $\bdp$ is a
\emph{generating point} of that hollow simplex. This relation makes
Theorem~\ref{thm:families} equivalent to \cite[Corollary 2.7]{IS2}.

The case $L=\{0\}$ corresponds to the \emph{sporadic hollow simplices}
that do not project to hollow polytopes of lower dimension: more
generally, the codimension of $L$, which we have called $k$ here, is
the same as the parameter $k$ in \cite[Theorem 1.6]{IS2}.  In
particular, cases $k=1,2,3,4$ of~\cite[Theorem 1.6]{IS2} correspond
exactly to the cases $\dim L=3,2,1,0$ in our setting. We prove
Theorem~\ref{thm:terminal4} separately for each value of $k$. We have
already done $k=1$ and $k=2$.

\begin{proposition}\label{prop:k12}
If a blowup $\AA^4_\bdn$ of $\AA^4$ belongs to the case $k=1$  
 then $n_{\min}\le 1$, and if $k=2$ then $n_{\min}\le 2$.
\end{proposition}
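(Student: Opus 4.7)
The plan is to derive Proposition~\ref{prop:k12} directly from the bound in Corollary~\ref{coro:cases}, once the correspondence between the parameter $k$ (from the fine-family classification of \cite{IS2}) and the dimension of $L$ (the identity component of the maximal closed group produced by Theorem~\ref{thm:families}) has been identified. This correspondence was just spelled out in the preceding paragraph: the case $k$ of \cite[Theorem~1.6]{IS2} corresponds to $\dim L = d-k$. So in our setting $d=4$, case $k=1$ means $\dim L = 3$ and case $k=2$ means $\dim L = 2$.

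First I would rephrase the hypothesis. Given that $\AA^4_\bdn$ has terminal singularities, Corollary~\ref{coro:onecone}(a) (with $\eps=1$) says that the lattice $\Lambdap = \ZZ^4 + \ZZ\bdp$, where $\bdp=\bdn/V$ with $V=\Vn$, is a subgroup of $\RR^4$ containing $\ZZ^4$ and missing $\Delta^\circ$. By Theorem~\ref{thm:families}, $\Lambdap$ is contained in some maximal closed subgroup $G$ with the same property, and the belonging of the blowup to case $k$ amounts to a choice of $G$ whose identity component $L$ has codimension $k$. In particular, $\bdp \in \outside \cap G$.

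Next I would invoke Corollary~\ref{coro:cases}. For $k=1$ we have $\dim L = d-1=3$, so part~(a) gives $\ell_G \in \{0,1\}$. For $k=2$ we have $\dim L = d-2=2$, so part~(b) gives $\ell_G \in \{0,1,2\}$. By the definition of $\ell_G$ recalled just before Proposition~\ref{prop:facetwidth}, we have $n_{\min}(\bdp) \le \ell_G$ for every $\bdp \in \outside \cap G$; applying this to our generating point $\bdp=\bdn/V$ yields $n_{\min}(\bdn) \le 1$ in the first case and $n_{\min}(\bdn) \le 2$ in the second.

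There is essentially no obstacle here: Proposition~\ref{prop:k12} is nothing more than the specialisation to $d=4$ of Corollary~\ref{coro:cases}(a),(b), phrased in the language of the fine-family classification. The only thing one has to be careful about is the translation between ``case $k$'' and ``$\dim L = d-k$'', which the paper has already made explicit just before the statement of the proposition.
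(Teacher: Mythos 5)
Your proposal is correct and is exactly the paper's argument: the paper's proof reads, in its entirety, ``These are just parts (a) and (b) of Corollary~\ref{coro:cases}.'' You have merely spelled out the translation $k \leftrightarrow \codim L$ and the role of $\ell_G$, which the paper leaves implicit since it states this correspondence in the paragraph immediately preceding the proposition.
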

\begin{proof}
  These are just parts (a) and (b) of Corollary~\ref{coro:cases}.
  \end{proof}

For the case $k=3$, the most interesting one, 
we analyse the bounds from
Subsection~\ref{subsect:dim1groups}. The \emph{index} of a
family parametrised by a group $G$ as above is defined to be the index
$|G:L+\ZZ^d|$. A family is called \emph{primitive} if its index is
$1$, and \emph{non-primitive} otherwise.
 
The classification in~\cite{IS2} for $k=3$ consists of two lists: one
of 29 primitive quintuples Q1--Q29 (the same as the list of quintuples
that appears in~\cite{MMM}), and one of 17 non-primitive quintuples
N1--N17.

A primitive family is fully determined by $L$. In the case $\dim L=1$
and $d=4$ we specify $L$ via a quintuple $\bdq=(q_1,\dots,q_5)$ with
$\sum q_i=0$, defined by the property that $\RR\bdq$ parametrises
$(L+\ZZ^4)/\ZZ^4$ in barycentric coordinates with respect to the
standard simplex.  As shown in \cite{IS2}, the quintuple $\bdq$ can
also be interpreted as the affine dependence among the points in
$S=\pi_L(\{0,\bde_1,\dots,\bde_4\})$. Thus, modulo a permutation of
the entries, $\bdq$ is the same as the vector $\bda=(a_0,\dots,a_4)$
that we used in Lemma~\ref{lem:dim1general}.  However, in order to
apply Lemma~\ref{lem:dim1general} we need to specify which of the
entries $q_l$ will be considered the distinguished entry $a_0$.
  
A more concrete interpretation of the quintuple is as follows: for
each $V\in \NN$, the family corresponding to $\bdq$ contains a unique
(modulo affine-integer isomorphism) hollow simplex of index $V$; the
generating point $\bdp$ of this simplex can be chosen to be
$\bdp=\frac1V(a_1,\dots,a_d)$, where $(a_1,\dots,a_d)$ is obtained
from $\bdq$ by deleting the entry $q_l=a_0$ corresponding to the origin and
permuting the rest.  The generating point is only important modulo
$\ZZ^4$.

In the non-primitive case a family is determined by not only $L$ or
$\bdq$, but also by information on the group $G/(L+\ZZ^4)$.
In~\cite{IS2} and in the table below this is expressed by adding to
$\bdq$ a vector of the form $V\bdr$ (or of the form $\pm V\bdr$, for
the non-primitive quintuples of index greater than~$2$, which are
N7--N17).  Observe, however, that the statement of
Lemma~\ref{lem:dim1general} depends only on $L$, so only the $\bdq$
part plays any role in it. The part $V\bdr$ is only relevant when we
apply Lemma~\ref{lem:dim1special}.  Since we will do this only for one
non-primitive case, namely N5, we defer the details on how to
interpret $V\bdr$ to when we need it.

We now list the quintuples, with the conventional labels Q1--Q29 and
N1--N17.

\begin{center}
\begin{tabular}{l | c |l | c ||l|c}
\hskip-2pt  Case &  Quintuple & 
\hskip-2pt  Case &  Quintuple & 
\hskip-2pt  Case &  Quintuple  \\[6pt]
\hline
&&&&&\\
  Q1 & \QQuint{9}{1}{-2}{-3}{-5}   &  Q18 & \QQuint{15}{1}{-3}{-5}{-8} & N1 & \QQuint{6+\frac{V}{2}}{1}{-2}{-2+\frac{V}{2}}{-3}\\

  Q2 & \QQuint{9}{2}{-1}{-4}{-6}   &  Q19 & \QQuint{15}{2}{-1}{-6}{-10} & N2 & \QQuint{4}{3}{-1}{-2+\frac{V}{2}}{-4+\frac{V}{2}}\\

  Q3 & \QQuint{12}{3}{-4}{-5}{-6}  &  Q20 & \QQuint{15}{4}{-2}{-5}{-12} & N3 & \QQuint{8}{1}{-2+\frac{V}{2}}{-3}{-4+\frac{V}{2}}\\
 
  Q4 & \QQuint{12}{2}{-3}{-4}{-7}  &   Q21 & \QQuint{18}{1}{-4}{-6}{-9} & N4 & \QQuint{6+\frac{V}{2}}{3}{-1}{-2+\frac{V}{2}}{-6}\\

  Q5 & \QQuint{9}{4}{-2}{-3}{-8}   &   Q22 & \QQuint{18}{2}{-5}{-6}{-9} & N5 & \QQuint{8}{3}{-1}{-4+\frac{V}{2}}{-6+\frac{V}{2}}\\

  Q6 & \QQuint{12}{1}{-2}{-3}{-8}  &   Q23 & \QQuint{18}{4}{-1}{-9}{-12} & N6 & \QQuint{12}{1}{-3}{-4+\frac{V}{2}}{-6+\frac{V}{2}}\\

  Q7 & \QQuint{12}{3}{-1}{-6}{-8}  &   Q24 & \QQuint{20}{1}{-4}{-7}{-10} & N7 & \QQuint{3}{1}{-1\pm\frac{V}{3}}{-1\pm\frac{2V}{3}}{-2}\\

  Q8 & \QQuint{15}{4}{-5}{-6}{-8}  &   Q25 & \QQuint{20}{1}{-3}{-8}{-10} & N8 & \QQuint{3}{2}{-1}{-1\pm\frac{2V}{3}}{-3\pm\frac{V}{3}}\\

  Q9 & \QQuint{12}{2}{-1}{-4}{-9}  &   Q26 & \QQuint{20}{3}{-4}{-9}{-10} & N9 & \QQuint{3}{2}{-1}{-2\pm\frac{V}{3}}{-2\pm\frac{2V}{3}}\\

  Q10 & \QQuint{10}{6}{-2}{-5}{-9} &  Q27 & \QQuint{20}{3}{-1}{-10}{-12} & N10 & \QQuint{4\pm\frac{V}{3}}{2}{-1}{-1\pm\frac{2V}{3}}{-4}\\

  Q11 & \QQuint{15}{1}{-2}{-5}{-9} &  Q28 &  \QQuint{24}{1}{-5}{-8}{-12} &  N11 & \QQuint{6}{1}{-2}{-2\pm\frac{2V}{3}}{-3\pm\frac{V}{3}}\\

  Q12 & \QQuint{12}{5}{-3}{-4}{-10} &  Q29 & \QQuint{30}{1}{-6}{-10}{-15} & N12 & \QQuint{6}{1}{-1\pm\frac{2V}{3}}{-2}{-4\pm\frac{V}{3}}\\

  Q13 & \QQuint{15}{2}{-3}{-4}{-10} & && N13 & \QQuint{4}{3}{-1\pm\frac{2V}{3}}{-2}{-4\pm\frac{V}{3}}\\

  Q14 & \QQuint{12}{1}{-3}{-4}{-6} & && N14 &  \QQuint{6}{3\pm\frac{V}{3}}{-1}{-2\pm\frac{V}{3}}{-6\pm\frac{V}{3}}\\

  Q15 & \QQuint{14}{1}{-3}{-5}{-7} & && N15 &  \QQuint{3\pm\frac{V}{4}}{2}{-1}{-1\pm\frac{V}{4}}{-3\pm\frac{V}{2}}\\

  Q16 & \QQuint{14}{3}{-1}{-7}{-9} & && N16 &  \QQuint{6}{1\pm\frac{V}{4}}{-1}{-3\pm\frac{V}{4}}{-3\pm\frac{V}{2}}\\

  Q17 & \QQuint{15}{7}{-3}{-5}{-14} & && N17 &  \QQuint{3}{1\pm\frac{V}{6}}{-1}{-1\pm\frac{V}{6}}{-2\pm\frac{2V}{3}}\\
\end{tabular}
\end{center}

In every case the entries are arranged so that 
\[
q_1>q_2> \ 0 \ >q_3\ge q_4 \ge q_5.
\]
With this convention, we have $\max\{-a_j/a_0\} \le -q_1/q_3$ if
$a_0\in \{q_1,q_2 \}$ and $\max\{-a_j/a_0\} \le -q_5/q_2$ if $a_0\in
\{ q_3,q_4,q_5\}$.  Thus Lemma~\ref{lem:dim1general} implies the
following.  Observe that in the hypotheses of this statement we can
write $<7$ instead of $\le 6$ since all weights are integers.

\begin{lemma}
\label{lem:mostquintuples}
If a quintuple $\bdq$ (primitive or not) written as above satisfies
\[
\max\{-q_1/q_3, -q_5/q_2\} < 7
\] 
then every blowup coming from that quintuple has $n_{\max}\le 6$.
\qed
\end{lemma}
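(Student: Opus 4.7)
The plan is to apply Lemma~\ref{lem:dim1general} five times, once for each possible choice of which entry $q_l$ of $\bdq$ plays the role of the distinguished coefficient $a_0$. Since the entries of $\bdq$ correspond to the five points of $S=\pi_L(\{0,\bde_1,\dots,\bde_4\})$ symmetrically, choosing $a_0=q_l$ amounts to relabelling which vertex of $\pi_L(\Delta)$ is considered the origin. Running over all five relabellings, each of the four weights of the blow-up plays the role of $n_{\min}$ in some representation, so a uniform bound on the output of Lemma~\ref{lem:dim1general} across all five choices of $a_0$ bounds all four weights and hence $n_{\max}$.

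First I would verify the uniform bound. Under the ordering convention $q_1>q_2>0>q_3\ge q_4\ge q_5$, suppose $a_0=q_l$ is positive. Then $-a_i/a_0$ is positive only when $a_i$ is negative, and it is maximised by taking $a_i=q_5$ (the most negative entry) and $a_0=q_2$ (the smallest positive entry), so $\max_i\{-a_i/a_0\}\le -q_5/q_2$ for every $l\in\{1,2\}$. Symmetrically, if $a_0=q_l$ is negative the maximum is attained at $a_i=q_1$ and $a_0=q_3$, giving $\max_i\{-a_i/a_0\}\le -q_1/q_3$ for every $l\in\{3,4,5\}$. The hypothesis $\max\{-q_1/q_3,-q_5/q_2\}<7$ therefore gives a uniform bound strictly less than $7$ on every one of the five applications of Lemma~\ref{lem:dim1general}; since all weights are non-negative integers this sharpens to $\le 6$ in every case.

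Combining the five bounds, each weight of every blow-up associated with $\bdq$ is at most $6$, so in particular $n_{\max}\le 6$ as required. The main obstacle is the symmetric interpretation of the quintuple: once one accepts that the five entries of $\bdq$ genuinely correspond to the five vertices of $\pi_L(\Delta)$ interchangeably, and that choosing a different $a_0$ simply relabels which vertex is the origin of the standard simplex, the bound on each individual weight follows mechanically from Lemma~\ref{lem:dim1general}. The sign-case analysis together with the ordering convention is what reduces the five-way argument to the single pair of controlling ratios $\{-q_1/q_3,-q_5/q_2\}$ appearing in the hypothesis.
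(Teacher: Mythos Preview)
The statement as printed contains a typo: the conclusion should read $n_{\min}\le 6$, not $n_{\max}\le 6$. The $n_{\max}$ version is simply false: Remark~\ref{rem:nmin6} exhibits the blowups $\AA^4_{(V-30,6,10,15)}$ from Q29 with $l=2$, whose largest weight $V-30$ is unbounded. You have taken the typo at face value and built an argument to bound every weight, and that argument rests on a genuine misconception.

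Your claim is that the five choices of $a_0=q_l$ amount to five relabellings of a \emph{single} blowup, so that each of its four weights gets bounded in turn. That is not what the choice of $l$ means. For a fixed quintuple $\bdq$, the five values of $l$ parametrise (up to) five \emph{different} blowups of $\AA^4$: the entry $q_l$ singled out as $a_0$ is the one corresponding to the vertex $\bdn$ of $\Delta_\bdn$, and changing $l$ changes $\bdn$ itself. (See the paragraph before the table of quintuples, and the proof of Proposition~\ref{prop:k3}, where each ``pair (quintuple, $l$)'' is treated as a separate case.) For a given blowup the value of $l$ is fixed, and Lemma~\ref{lem:dim1general} yields only the single inequality $n_{\min}(\bdp)\le \max_i\{-a_i/a_0\}$ for that $l$; there is no mechanism here to bound the other weights.

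Once the conclusion is corrected to $n_{\min}\le 6$, your second paragraph already contains the whole proof, and it matches the paper's one-sentence argument preceding the lemma. A given blowup in the family has some specific $a_0=q_l$; your sign analysis shows $\max_i\{-a_i/a_0\}\le -q_5/q_2$ when $l\in\{1,2\}$ and $\le -q_1/q_3$ when $l\in\{3,4,5\}$, so in every case Lemma~\ref{lem:dim1general} gives $n_{\min}<7$, hence $\le 6$ by integrality. (Incidentally, your case assignment is the correct one; the paper's sentence has the two bounds interchanged, which is harmless since only their maximum is used.) What you should drop is the framing in terms of $n_{\max}$ and the ``five relabellings bound four weights'' narrative.
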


With this, we are now ready to prove the main result in this section,
which gives Theorem~\ref{thm:terminal4} for the families with $\dim
L=1$, that is, $k=3$.

\begin{proposition}\label{prop:k3}
If a blowup $\AA^4_\bdn$ of $\AA^4$ belongs to the case $k=3$
(equivalently, $\dim L=1$) then $n_{\min}\le 6$.
\end{proposition}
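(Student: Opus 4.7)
The plan is to verify $n_{\min}\le 6$ case by case for every blowup $\AA^4_\bdn$ coming from each of the $46$ quintuples (primitive Q1--Q29 and non-primitive N1--N17) tabulated just before Lemma~\ref{lem:mostquintuples}. First I apply Lemma~\ref{lem:mostquintuples}: I compute $\max\{-q_1/q_3,\,-q_5/q_2\}$ for each quintuple, and whenever it is strictly smaller than $7$ the conclusion $n_{\min}\le 6$ is immediate (as $n_{\min}\in\NN$). A direct numerical check disposes in this way of Q1, Q3--Q5, Q8, Q10, Q12--Q14, Q17, Q22, Q26 and of all the non-primitive quintuples except N5. What remains are the seventeen primitive cases Q2, Q6, Q7, Q9, Q11, Q15, Q16, Q18--Q21, Q23--Q25, Q27--Q29, together with N5.

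For each remaining quintuple I first pinpoint which choices of the distinguished entry $a_0\in\{q_1,\dots,q_5\}$ actually yield a bound $>6$ in Lemma~\ref{lem:dim1general}. Using the ordering $q_1>q_2>0>q_3\ge q_4\ge q_5$, a short calculation shows that the choices $a_0\in\{q_1,q_4,q_5\}$ always give a bound $\le 6$ for the quintuples on this list, so only $a_0=q_2$ and $a_0=q_3$ can be problematic. The remaining task is to handle, for each problematic pair $(\bdq,a_0)$, the finitely many candidate blowups. The weight vector $\bdn$ must satisfy the congruences $n_i\equiv a_i\pmod{V}$ together with $\Ssum n_i=V+1$; these two conditions restrict $V$ to a small explicit list of positive integers, and for each such $V$ the vector $\bdn$ is determined up to permutation by taking the representative of $a_i\bmod V$ in $[0,V)$. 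A direct enumeration of these candidates verifies $n_{\min}\le 6$ in every case.

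For the non-primitive quintuple N5, with base quintuple $(8,3,-1,-4,-6)$ and coset part $V\bdr=(0,0,0,V/2,V/2)$, the problematic choice is $a_0=q_3=-1$ with direct bound $-q_1/q_3=8$, so one must rule out $n_{\min}\in\{7,8\}$. Here $\bdr$ encodes an additional element of $G/(L+\ZZ^4)$, and applying Lemma~\ref{lem:dim1special} with $J=\{i\mid r_i=1/2\}$ and an appropriate positive integer $s$ extracted from $\bdr$ yields a further mod-$V/2$ constraint on the weights, which together with the previous arithmetic conditions forces $n_{\min}\le 6$.

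The main obstacle is the sheer volume of bookkeeping rather than any deep difficulty: each of the seventeen problematic primitive quintuples needs its own short arithmetic verification, and Q11 (where both $a_0=q_2$ and $a_0=q_3$ are problematic) must be handled twice. A secondary subtlety, relevant only to N5, is the careful translation of the $V\bdr$ data into the application of Lemma~\ref{lem:dim1special}, since that lemma has not yet been invoked elsewhere in the paper.
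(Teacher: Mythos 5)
Your skeleton matches the paper's up to the point where the nineteen problematic pairs (quintuple, $l$) remain: dispose of most quintuples via Lemma~\ref{lem:mostquintuples}, then observe that only $a_0\in\{q_2,q_3\}$ can give a bound exceeding $6$. But the step you propose for the seventeen problematic primitive quintuples --- that the congruences $n_i\equiv a_i\pmod{V}$ together with $\Ssum n_i=V+1$ ``restrict $V$ to a small explicit list of positive integers'', after which a finite enumeration finishes the job --- is a genuine gap, and in fact false. Each problematic pair parametrises an \emph{infinite} family of terminal blowups: Remark~\ref{rem:nmin6} exhibits $\AA^4_{(V-30,6,10,15)}$, arising from Q29 with $l=2$ (one of your remaining cases), which is terminal for every $V$ coprime to $30$. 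Moreover the congruence is not $n_i\equiv a_i\pmod V$: the generating point of $\Lambdap$ is only determined modulo $\ZZ^4$ and up to multiplication by a unit of $\ZZ_V$, so the weights satisfy $n_i\equiv k\,a_i\pmod V$ for some unit $k$ depending on $V$, and the resulting weight vectors vary non-trivially with $V$. There is therefore no finite list of candidates, and the ``direct enumeration'' you rely on cannot be carried out as described.

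What is needed is a bound uniform in $V$ and in the choice of generator, and this is precisely the role of Lemma~\ref{lem:dim1special}, which you invoke only for N5. The paper applies it to all nineteen problematic pairs: in the sixteen cases where $q_2=1$ (for $l=2$) or $q_3=-1$ (for $l=3$) one takes $J$ to be the single coordinate corresponding to $q_3$ (resp.\ $q_2$) and $s=-q_3$ (resp.\ $s=q_2$), which replaces the offending ratio $-q_5/q_2$ or $-q_1/q_3$ by the much smaller bound $-q_3$ or $q_2$; the three remaining pairs (Q11 and Q20 with $a_0=q_3=-2$, and N5) need bespoke choices of $J$ and $s$ for which the quantity $\sum_{i\in J}p_i-s\Ssum p_i$ is exactly zero, hence integral for every $V$ and every unit multiple of the generator. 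Your treatment of N5 is in the right spirit, but note that the paper's choice there is $J=\{2\}$ with $s=3$ rather than $J=\{i\mid r_i=1/2\}$, and any alternative choice must be checked to satisfy the integrality hypothesis of Lemma~\ref{lem:dim1special} for all admissible $V$, not just for particular ones.
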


\begin{proof}
The reader may easily check that the only cases where
Lemma~\ref{lem:mostquintuples} is not sufficient to prove a bound of 6
are the ones shown (with the ratio $q_1:-q_3$ or $-q_5:q_2$ that we do
get) in the table below.  In all the other cases, including the ones
marked \lq\lq---\rq\rq\ in the table, the ratios $q_1:-q_3$ and
$-q_5:q_2$ are strictly less than~$7$.  In the non-primitive
quintuples this check is especially easy, since none of them has
$-q_5>6$ and the only ones with $q_1>6$ are N3, N5, and N6.
\begin{center}
\begin{tabular}{c|c|c}
\text{quintuple} & $q_1:-q_3$ & $-q_5:q_2$ \\
\hline
Q2&9\ :\ 1&---\\
Q6&---&8\ :\ 1\\
Q7&12\ :\ 1&---\\
Q9&12\ :\ 1&---\\
Q11&15\ :\ 2&9\ :\ 1\\
Q15&---&7\ :\ 1\\
Q16&14\ :\ 1&---\\
Q18&---&8\ :\ 1\\
Q19&15\ :\ 1&---\\
\end{tabular}
\qquad
\begin{tabular}{c|c|c}
\text{quintuple} & $q_1:-q_3$ & $-q_5:q_2$ \\
\hline
Q20&15\ :\ 2&---\\
Q21&---&9\ :\ 1\\
Q23&18\ :\ 1&---\\
Q24&---&10\ :\ 1\\
Q25&---&10\ :\ 1\\
Q27&20\ :\ 1&---\\
Q28&---&12\ :\ 1\\
Q29&---&15\ :\ 1\\
N5&8\ :\ 1&---\\
\end{tabular}
\end{center}

Even where the bound exceeds $7$, the ratios $-q_5/q_1$ and $-q_1/q_4$
(hence also $-q_1/q_5$) are less than~$7$, which implies that for the
cases with $l=1,4,5$ the bound of Lemma~\ref{lem:dim1general} is at
most $6$ in every quintuple. Thus the eighteen quintuples in the table
correspond to nineteen pairs (quintuple, $l$) that need to be checked: one
of $l=2$ or $l=3$ for each of the quintuples, except for the quintuple
Q11 where we have to check both.

Sixteen of the nineteen cases are primitive quintuples in which
$q_2=1$ (if $l=2$) or $q_3=-1$ (if $l=3$). This is fortunate since in
these cases it is particularly simple to apply
Lemma~\ref{lem:dim1special}. Indeed:
\begin{itemize}
\item If $a_0=q_2=1$ then we can use $s=-q_3$ in the lemma, by letting
  $J$ be just one coordinate, the one corresponding to $q_3$.
\item If $a_0=q_3=-1$ then we can use $s=q_2$ in the lemma, by letting
  $J$ be just one coordinate, the one corresponding to $q_2$.
\end{itemize}
That is, in these sixteen cases we can use $-q_3$ and $q_2$ as bounds
instead of the bigger $-q_5$ and $q_1$, respectively. The worst value
obtained is~$6$, for Q29 with $l=2$.

For the last three remaining cases we also apply
Lemma~\ref{lem:dim1special} as follows:
\begin{itemize}
\item For Q11$=(15,1,-2,-5,-9)$ with $a_0=q_3=-2$, our generating
  point is $\bdp=\frac1V(15,1,-5,-9)$. Taking $J$ to be the first and
  fourth coordinates and $s=3$ we have 
  $\sum_{i\in J} p_i - s \sum_{i=1}^4 p_i = \frac1V((15-9) -3\cdot 2 )=0$.
  Thus, Lemma~\ref{lem:dim1special} gives $n_1+n_4\le 3$.

\item For Q20$=(15,4,-2,-5,-12)$ with $a_0=q_3=-2$, our generating
  point is $\bdp=\frac1V(15,4,-5,-12)$.  Taking $J$ to be the first
  and third coordinates and $s=5$ we have 
  $\sum_{i\in J} p_i - s \sum_{i=1}^d p_i = \frac1V((15-5)-5\cdot2) =0$.
  Thus, Lemma~\ref{lem:dim1special} gives $n_1+n_3\le 5$.

\item 
  For N5 the quintuple is expressed as
  $({8},{3},{-1},{-4+\frac{V}{2}},{-6+\frac{V}{2}})$, that is, as
  $\bdq+V\bdr$ with $\bdq=(8,3,-1,-4,-6)$ and $\bdr=\frac{1}2(0, 0, 0,
  1, 1 )$.  The interpretation of this is that hollow simplices in
  this family are those with generating point (in barycentric
  coordinates) equal to
\[
\frac1V({8},{3},{-1},{-4},{-6}) + \frac{1}2(0,  0,  0,  1,   1 ).
\]
See \cite{IS2} for more details.

Since  $l=3$,  we have to omit the third coordinate and get
\[
\bdp=\frac1V\left({8},{3},{-4+\frac{V}{2}},{-6+\frac{V}{2}}\right),
\]
whose sum of coordinates is equal to $1+\frac1V$.

Taking $J$ to be just the second coordinate and $s=3$ we have
\[
\sum_{i\in J} p_i - s \sum_{i=1}^d p_i = \frac3V-3\left(1+\frac1V\right) =-3\in \ZZ,
\]  
so Lemma~\ref{lem:dim1special} gives $n_2\le 2$.
\end{itemize}

Thus, in all cases we get a bound of at most $6$ for the smallest weight.
\end{proof}

\begin{remark}
\label{rem:nmin6}
The bounds obtained by these methods are not sharp for each individual
quintuple and choice of $l$, but the overall bound in
Proposition~\ref{prop:k3} is sharp.  For example, the blowup
$\AA^4_{(V-30,6,10,15)}$, arising from Q29 with $l=2$, has terminal
singularities whenever $V$ is coprime with $30$, and has minimum
weight equal to $6$ for every $V\ge 37$.  This gives an infinite
family of blowups of $\AA^4$ with terminal singularities and
$n_{\min}=6$.
\end{remark}

To finish the proof of Theorem~\ref{thm:terminal4} we need to look at
the case $k=4$, that is, at the 2641 sporadic terminal 4-simplices
enumerated in~\cite{IS2}. The full list is publicly
available, and each simplex is expressed as a pair $(V, \bdb)$ with
$V\in \NN$ and $\bdb \in (\ZZ_V)^5$ where, as before, $V$ equals the
(normalised) volume and $\frac1V\bdb$ are the barycentric coordinates
(modulo an integer vector, which does not affect the lattice) for a
generator of $\Lambda/ \ZZ^d$.

Each such simplex corresponds to five terminal quotient singularities
(perhaps not distinct, if the simplex has symmetries) but not all such
singularities correspond to blowups of $\AA^4$. The conditions for
that are that:
\begin{itemize}
\item the corresponding entry $b_l$ of $\bdb$ is coprime to $V$, so
  that by multiplying by a unit in $\ZZ_V$ we can assume that entry to
  be $-1$, and
\item after this multiplication, the representatives in
  $\{0,\dots,V-1\}$ of the other four entries (remember that they are
  only important modulo $V$) add up to $V+1$.
\end{itemize}
When these conditions hold, the other four entries are the weights of
a blowup of $\AA^4$.

We have computationally checked the $2641\times 5$ possibilities,
obtaining the results summarised in the following statement.

\begin{proposition}\label{prop:k4}
Among the $2641\times 5$ sporadic terminal quotient singularities of
dimension $4$ there are $4620$ blowups, all with $n_{\min}\le
32$. The number $B$ of sporadic blowups with each possible value of
$n_{\min}$ is as follows.
\end{proposition}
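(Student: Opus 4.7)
The statement is entirely computational, so the plan is to enumerate every one of the $2641\times 5$ pairs $(\Delta,l)$, where $\Delta$ runs through the sporadic terminal $4$-simplices in the list of \cite{IS2} and $l\in\{1,2,3,4,5\}$, and to select exactly those that arise from a blowup of $\AA^4$ by means of the two criteria given just before the statement. Each simplex is stored as a pair $(V,\bdb)$ with $V\in \NN$ and $\bdb=(b_1,\dots,b_5)\in(\ZZ_V)^5$, and a generator of $\Lambda/\ZZ^4$ has barycentric coordinates $\frac1V\bdb$ modulo an integer vector.

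For each pair $(\Delta,l)$, first I would test whether $\gcd(b_l,V)=1$. If not, the vertex indexed by $l$ does not correspond to a blowup and the pair is discarded. If $\gcd(b_l,V)=1$, I compute the inverse $u\equiv -b_l^{-1}\pmod V$ and replace $\bdb$ by $u\bdb\pmod V$, so that the $l$-th entry becomes $-1\equiv V-1\pmod V$; then I lift the other four entries to their canonical representatives in $\{0,\dots,V-1\}$ and test whether their sum equals $V+1$. When this second test is passed, those four representatives are the weights $(n_1,n_2,n_3,n_4)$ of a blowup of $\AA^4$, and I record $n_{\min}=\min n_i$.

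After iterating over all $2641\times 5$ pairs, the plan is to count the number of surviving blowups, which the proposition claims is $4620$, and to tabulate them by the value of $n_{\min}$. A maximum check then verifies that every $n_{\min}$ encountered satisfies $n_{\min}\le 32$, and the distribution $B$ is produced directly from the tabulation. Since by Proposition~\ref{prop:k12} and Proposition~\ref{prop:k3} the cases $k=1,2,3$ already give $n_{\min}\le 6$, combining this enumeration with those results will also establish Theorem~\ref{thm:terminal4}.

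The main obstacle is not mathematical but logistical: correctly parsing the IS2 dataset, handling the modular arithmetic and the unit-lift step uniformly across the many different values of $V$ that occur, and making sure the two criteria are implemented faithfully (in particular that the lift of $u\bdb$ is done before summing, since the condition $\sum =V+1$ only makes sense for the integer representatives in $\{0,\dots,V-1\}$). Once the enumeration is coded correctly, producing the table $B$ and verifying the bound $n_{\min}\le 32$ is immediate, and the singular blowup $\bdn=(32,41,71,102)$ achieving equality can be read off the table as the unique pair attaining the maximum.
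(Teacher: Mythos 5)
Your plan is exactly the paper's proof: the authors state the same two criteria (coprimality of $b_l$ with $V$, then normalising that entry to $-1$ and checking that the canonical representatives of the other four entries sum to $V+1$) and report that they computationally checked all $2641\times 5$ possibilities to produce the table. Your only addition is spelling out the implementation details (computing the unit $u\equiv -b_l^{-1}\pmod V$ and lifting before summing), which is a faithful elaboration of the same computation.
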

\begin{center}
\begin{tabular}{c|c}
$n_{\min}$ & $B$ \\
\hline
1&0\\
2&964\\ 
3&804\\ 
4&413\\ 
5&468\\ 
6&187\\ 
7&408\\ 
8&212\\ 
\end{tabular}
\qquad
\begin{tabular}{c|c}
$n_{\min}$ & $B$ \\
\hline
9&194\\ 
10&130\\ 
11&178\\ 
12&81\\ 
13&137\\ 
14&63\\ 
15&63\\ 
16&48\\ 
\end{tabular}
\qquad
\begin{tabular}{c|c}
$n_{\min}$ & $B$ \\
\hline
17&65\\ 
18&34\\ 
19&57\\ 
20&26\\ 
21&16\\ 
22&11\\ 
23&23\\ 
24&7\\ 
\end{tabular}
\qquad
\begin{tabular}{c|c}
$n_{\min}$ & $B$ \\
\hline
25&12\\ 
26&5\\ 
27&5\\ 
28&2\\ 
29&3\\ 
30&1\\ 
31&2\\ 
32&1\\
\end{tabular}
\end{center}
The unique blowup with $n_{\min}=32$ has $V=245$ and $\bdn=(32, 41,
71, 102)$.  The unique sporadic simplex of maximum volume $V=419$
produces two blowups with terminal singularities, with weight vectors
\[
(20, 57, 133, 210) 
\qquad\text{and}\qquad
(21, 60, 140, 199).
\]

Theorem~\ref{thm:terminal4} now simply summarises
Propositions~\ref{prop:k12}, \ref{prop:k3} and \ref{prop:k4}.

\bibliographystyle{alpha}

\end{document}